\def \c{\mathbb{C}}
\def \z{\mathbb{Z}}
\def \r{\mathbb{R}}
\def \n{\mathbb{N}}
\def \p{\mathbb{P}}
\def \K{\mathcal{K}}
\def \SL{\textup{SL}}
\def \SP{\textup{SP}}
\def \SO{\textup{SO}}
\def \.{\cdot}
\def \Con{\textup{Con}}
\def \dim{\textup{dim}}
\def \Vol{\textup{Vol}}
\def \supp{\textup{supp}}
\def \Area{\textup{Area}}
\def \ord{\textup{ord}}
\def \K{{\bf K}_{rat}(X)}
\def \qp{quasi-projective }
\theoremstyle{plain}
\newtheorem{Th}{Theorem}[section]
\newtheorem{Prop}[Th]{Proposition}
\newtheorem{Cor}[Th]{Corollary}
\theoremstyle{definition}
\newtheorem{Ex}[Th]{Example}
\newtheorem{Def}[Th]{Definition}
\begin{document}
\title{Algebraic equations and convex bodies}
\author{Kiumars Kaveh, A. G. Khovanskii}
\maketitle

\begin{center}
{\it Dedicated to Oleg Yanovich Viro on the occasion of his sixtieth birthday}
\end{center}
\vspace{.4cm}

{\small \noindent{\bf Abstract.} The well-known Bernstein-Ku\v{s}hnirenko
theorem from the theory of Newton polyhedra relates algebraic geometry
and the theory of mixed volumes. Recently the authors have found a
far-reaching generalization of this theorem to generic systems of
algebraic equations on any quasi-projective variety. In the present
note we review these results and their applications to algebraic
geometry and convex geometry.}\\

\noindent{\it Key words:} Bernstein-Ku\v{s}nirenko theorem,
convex body, mixed volume, Alexandrov-Fenchel inequality,
Brunn-Minkowski inequality, Hodge index theorem,
intersection theory of Cartier divisors, Hilbert function.\\

\noindent{\it AMS subject classification:} 14C20; 52A39\\

\tableofcontents

\section{Introduction}
The famous Bernstein-Ku\v{s}hnirenko theorem from the Newton
polyhedra theory relates algebraic geometry (mainly the theory of
toric varieties) with the theory of mixed volumes in convex geometry.
This relation is
useful in both directions. On one hand it allows one to prove
Alexandrov-Fenchel inequality (the most important and the hardest
result in the theory of mixed volumes) using Hodge inequality
from the theory of algebraic surfaces. On the other hand it suggests
new inequalities in the intersection theory of Cartier divisors
analogous to the known inequalities for mixed volumes.

Recently the authors have found a far-reaching generalization of Ku\v{s}hnirenko
theorem in which instead of complex torus $(\c^*)^n$ we consider any
quasi-projective variety $X$ and instead of a finite dimensional space of functions
spanned by monomials in $(\c^*)^n$, we consider any finite
dimensional space of rational functions on $X$.

To this end, firstly we develop an intersection theory for finite dimensional subspaces of rational functions
on a \qp variety. It can be considered as a generalization of the
intersection theory of Cartier divisors for a (non -complete) variety $X$.
We show that this intersection theory enjoys all the properties of mixed volumes
\cite{Askold-Kiumars-arXiv2}. Secondly, we introduce the {\it Newton convex body} which is a
far generalization of the Newton polyhedron of a Laurent polynomial.
Our construction of Newton convex body depends on a fixed $\z^n$-valued valuation on
the field of rational functions on $X$. It associates to any finite dimensional space $L$
of rational functions on $X$, its Newton convex body $\Delta(L)$. We
obtain a direct generalization of the Ku\v{s}nirenko theorem in
this setting (see Theorem \ref{th-main}).

This construction then allows us to give a proof of the Hodge
inequality using elementary geometry of planar convex domains and
(as a corollary) an elementary proof of Alexandrov-Fenchel
inequality. In general our construction does not imply a
generalization of the Bernstein theorem. Although we also obtain a generalization of
this theorem for some cases when the variety $X$ is equipped with a
reductive group action.

In this paper we present a review of the results mentioned above. We omit the proofs in this
short note. A preliminary version together with proofs can be found at \cite{Askold-Kiumars-arXiv}.
The paper \cite{Askold-Kiumars-arXiv2} contains a detailed version of the first half of the preprint
\cite{Askold-Kiumars-arXiv}, and a detailed version of the second half of \cite{Askold-Kiumars-arXiv}
will appear very soon.

After posting of these results in the arXiv, we learned that we were not the only ones
to have been working in this direction. Firstly, A. Okounkov was the pioneer to define (in
passing) an analogue of Newton polytope in general situation
in his interesting papers \cite{Okounkov-Brunn-Minkowski, Okounkov-log-concave}
(although his case of interest was when $X$ has a reductive group action).
Secondly, R. Lazarsfeld and M. Mustata, based on Okounkov's previous
works, and independently of our preprint, have come up with closely
related results \cite{Lazarsfeld-Mustata}. Recently, following
\cite{Lazarsfeld-Mustata}, similar results/constructions have been
obtained for line bundles on arithmetic surfaces \cite{Yuan}.

\section{Mixed volume}
By a {\it convex body} we mean a convex compact subset of $\r^n$. There are two operations of addition
and scalar multiplication for convex bodies: let $\Delta_1$, $\Delta_2$ be convex bodies, then
their sum
$$\Delta_1 + \Delta_2 = \{x+y \mid x \in \Delta_1, y \in \Delta_2 \},$$
is also a convex body called the {\it Minkowski sum} of $\Delta_1$, $\Delta_2$. Also for a convex body $\Delta$ and
a scalar $\lambda \geq 0$, $$\lambda \Delta = \{\lambda x \mid x \in \Delta \},$$ is a convex body.

Let $\Vol$ denotes the $n$-dimensional volume in $\r^n$ with respect to the
standard Euclidean metric. Function $\Vol$ is a homogeneous polynomial of degree $n$ on the cone of convex bodies,
i.e. its restriction to each finite dimensional section of the cone is a homogeneous polynomial of degree $n$. More
precisely: for any $k>0$ let $\r_+^k$ be the positive octant in $\r^k$ consisting of all
${\bf \lambda} = (\lambda_1,\dots,\lambda_k)$ with $\lambda_1\geq
0,\dots,\lambda_k\geq 0$. Then polynomiality of $\Vol$ means that
for any choice of convex bodies $\Delta_1,\dots,\Delta _k$, the function
$P_{\Delta_1,\dots,\Delta_k}$ defined on $\r^k_{+}$ by
$$P_{\Delta_1,\dots,\Delta_k}(\lambda_1,\dots,\lambda_k)=\Vol(\lambda_1\Delta_1+\dots+
\lambda_k\Delta_k),$$ is a homogeneous polynomial of degree $n$.

By definition the {\it mixed volume of}  $V(\Delta_1,\dots,\Delta_n)$ of
an $n$-tuple $(\Delta_1,\dots,\Delta_n)$ of convex bodies
is the coefficient of the monomial $\lambda_1\dots\lambda_n$ in the polynomial
$P_{\Delta_1,\dots,\Delta_n}$ divided by $n!$.

This definition implies that the mixed volume is the polarization of the volume polynomial,
that is, it is a function on the $n$-tuples of convex bodies satisfying the
following:
\begin{itemize}
\item[(i)] (Symmetry) $V$ is symmetric with respect to permuting the bodies $\Delta_1, \ldots, \Delta_n$.
\item[(ii)] (Multi-linearity) It is linear in each argument with respect to the Minkowski sum. Linearity in the first argument
means that for convex bodies $\Delta_1'$, $\Delta_1''$ and
$\Delta_2,\dots,\Delta_n$ we have:
$$ V(\Delta_1'+\Delta_1'',\dots, \Delta_n)=V(\Delta_1',\dots,
\Delta_n)+V(\Delta_1'',\dots, \Delta_n).$$

\item[(iii)] (Relation with volume) On the diagonal it coincides with volume, i.e. if
$\Delta_1 =\dots=\Delta_n=\Delta$, then $V(\Delta_1,\dots,
\Delta_n)=\Vol(\Delta)$.
\end{itemize}

The above three properties characterize the mixed volume: it is the
unique function satisfying (i)-(iii).

The following two inequalities are easy to verify:

\noindent 1) Mixed volume is non-negative, that is, for any $n$-tuple
of convex bodies $\Delta_1, \dots, \Delta_n$ we have
$$V(\Delta_1,\dots, \Delta_n)\geq 0.$$

\noindent 2) Mixed volume is monotone, that is,
for two $n$-tuples of convex bodies $\Delta'_1\subset \Delta_1,\dots, \Delta'_n\subset
\Delta_n$ we have $$V(\Delta_1,\dots, \Delta_n)\geq
V(\Delta'_1,\dots, \Delta'_n).$$

The following inequality attributed to Alexandrov and Fenchel is important and very
useful in convex geometry. All its previously known proofs are rather complicated ( see
\cite{Burago-Zalgaller}).
\begin{Th}[Alexandrov-Fenchel] \label{thm-alexandrov-fenchel}
Let $\Delta_1, \ldots, \Delta_n$ be convex bodies
in $\r^n$. Then
$$ V(\Delta_1,
\Delta_2, \ldots, \Delta_n)^2 \geq V(\Delta_1, \Delta_1, \Delta_3,
\ldots, \Delta_n) V(\Delta_2, \Delta_2, \Delta_3, \ldots, \Delta_n)
.$$
\end{Th}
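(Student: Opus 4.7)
The strategy I would adopt is the classical reduction of Alexandrov--Fenchel to the Hodge index theorem on an algebraic surface, but carried out through the generalized Ku\v{s}hnirenko correspondence advertised in Theorem \ref{th-main} (rather than through toric varieties alone), so that the same proof also handles the Hodge-type inequality via planar convex geometry as the paper promises.

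First, by continuity and homogeneity of the mixed volume it suffices to prove the inequality when $\Delta_1,\dots,\Delta_n$ are rational convex polytopes. After scaling by a common integer, I may assume each $\Delta_i$ is the Newton polytope $\Delta(L_i)$ of a finite dimensional subspace $L_i\subset\A$. By Ku\v{s}hnirenko's theorem (in the mixed form obtained by polarizing the volume identity), the mixed volume equals, up to $n!$, the number of common zeros in $\Tor$ of generic members $f_i\in L_i$:
$$ n!\,V(\Delta(L_1),\dots,\Delta(L_n)) \;=\; [L_1,\dots,L_n]. $$
Thus the inequality is equivalent to the analogous inequality for the mixed intersection index of $n$ subspaces of rational functions.

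Second, I would slice down to a surface. Fix generic sections of $L_3,\dots,L_n$ and let $C$ be the curve cut out by their common zero locus in a suitable smooth projective compactification $X$ of $\Tor$; passing to a resolution, let $S$ be a smooth projective surface equipped with morphisms so that $L_1$ and $L_2$ pull back to basepoint-free linear systems and their generic members restrict to nef divisors $D_1, D_2$ on $S$. The key compatibility is that the mixed intersection index on $\Tor$ computed from $L_1,L_2$ after slicing by $L_3,\dots,L_n$ coincides with the intersection number $D_1\cdot D_2$ on $S$. Granting this, the Hodge index theorem applied to the nef classes $D_1,D_2$ on $S$ gives
$$(D_1\cdot D_2)^2 \;\geq\; (D_1\cdot D_1)(D_2\cdot D_2),$$
and unravelling the identification via Ku\v{s}hnirenko returns exactly the Alexandrov--Fenchel inequality for $\Delta_1,\dots,\Delta_n$.

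Third, in the spirit of the paper I would avoid invoking the classical Hodge index theorem as a black box. On a surface the generalized Ku\v{s}hnirenko theorem identifies $D_i\cdot D_j$ with a mixed area of two planar convex bodies $\Delta(L_1|_S),\Delta(L_2|_S)$, so the desired Hodge inequality becomes the two-dimensional statement $V(\Delta_1',\Delta_2')^2\geq V(\Delta_1',\Delta_1')V(\Delta_2',\Delta_2')$ for planar convex bodies. This case is elementary: it is the mixed form of the two-dimensional Brunn--Minkowski inequality (equivalently the reverse Cauchy--Schwarz inequality for mixed areas, which follows from the concavity of $\Delta\mapsto\Vol(\Delta)^{1/2}$ in the plane). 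Feeding this back into the slicing argument closes the loop without recourse to the classical proof of Hodge.

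The main obstacle, I expect, is the slicing/resolution step: ensuring that generic complete intersections of divisors coming from $L_3,\dots,L_n$ yield a smooth irreducible surface on which $L_1, L_2$ induce nef linear systems, and that the Newton body construction commutes with this restriction so that mixed volumes translate cleanly into mixed intersection numbers. This is a Bertini-plus-base-locus issue that is routine for very ample systems on complete varieties but requires the full strength of the intersection theory of subspaces of rational functions from \cite{Askold-Kiumars-arXiv2} in the quasi-projective setting of Theorem \ref{th-main}. Everything else, including the Brunn--Minkowski input for planar bodies, is elementary.
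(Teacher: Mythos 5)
Your overall chain of reductions — translate mixed volumes to intersection indices via Bernstein--Ku\v{s}nirenko, slice to a surface, prove the resulting Hodge-type inequality on the surface from planar convex geometry, and finish by rational approximation and continuity — is exactly the paper's strategy (Sections \ref{sec-main}--\ref{sec-Alexandrov-Fenchel}). The paper packages it top-down, first developing the algebraic Brunn--Minkowski, Hodge, and Alexandrov--Fenchel inequalities for arbitrary quasi-projective varieties (Corollaries \ref{cor-Brunn-Mink-int-index}, \ref{cor-Hodge}, Theorem \ref{th-Alex-Fenchel-alg}) and only then specializing to $(\c^*)^n$; you work bottom-up directly in $(\c^*)^n$ and make the slicing explicit, but the content is the same, and you correctly flag the Bertini-type slicing as the step that requires the full quasi-projective intersection theory (it is also the step the paper leaves unspelled-out when passing from Corollary \ref{cor-Hodge} to Theorem \ref{th-Alex-Fenchel-alg}).

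There is, however, one concrete error in your third step. You write that on the sliced surface $S$ ``the generalized Ku\v{s}hnirenko theorem identifies $D_i\cdot D_j$ with a mixed area of two planar convex bodies $\Delta(L_1|_S),\Delta(L_2|_S)$.'' This is precisely the generalized \emph{Bernstein} statement that the paper warns is false in general: after slicing, $S$ is no longer a torus and the restricted subspaces are no longer toric, so one only has the inclusion $\Delta(S(L_1))+\Delta(S(L_2))\subseteq\Delta(S(L_1L_2))$ of Theorem \ref{th-main}(4), not equality, and consequently $D_1\cdot D_2$ need not equal the mixed area of the two Newton bodies. What Theorem \ref{th-main} does give on $S$ is the identification of each \emph{self}-intersection $[L,L]$, $[G,G]$, $[LG,LG]$ with twice the area of the corresponding Newton body. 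The correct fix is the paper's argument in Corollaries \ref{cor-Brunn-Mink-int-index}--\ref{cor-Hodge}: expand $[LG,LG]=[L,L]+2[L,G]+[G,G]$ by multilinearity, apply the planar Brunn--Minkowski inequality together with the inclusion above to obtain $[LG,LG]^{1/2}\geq[L,L]^{1/2}+[G,G]^{1/2}$, and cancel. Your parenthetical remark about concavity of $\Vol^{1/2}$ shows you are reaching for exactly this tool, but a route through a direct mixed-area identification of $D_1\cdot D_2$ would require an additivity of Newton bodies that is unavailable on $S$.
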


Below we mention a formal corollary of Alexandrov-Fenchel inequality.
First we need to introduce a notation for when we have repetition
of convex bodies in the mixed volume. Let $2\leq m\leq n$
be an integer and $k_1+\dots+k_r=m$ a partition of $m$ with $k_i \in \n$.
Denote by $V(k_1*\Delta_1,\dots, k_r*\Delta_r,\Delta_{m+1},\dots,\Delta_n)$
the mixed volume of the $\Delta_i$ where
$\Delta_1$ is repeated $k_1$ times, $\Delta_2$ is repeated $k_2$ times, etc.
and $\Delta_{m+1},\dots,\Delta_n$ appear once.

\begin{Cor}  With the notation as above, the following inequality holds:
$$V^m(k_1*\Delta_1,\dots,
k_r*\Delta_r,\Delta_{m+1},\dots,\Delta_n)\geq \prod\limits _{1\leq j
\leq r}  V^{k_j}(m*\Delta_j,\Delta_{m+1}\dots,\Delta_n).$$
\end{Cor}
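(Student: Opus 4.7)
My plan is to deduce the corollary from the Alexandrov--Fenchel inequality by induction on the number $r$ of distinct convex bodies on the left. The base case $r=1$ is immediate, as both sides collapse to $V(m*\Delta_1,B)^m$ (where I abbreviate $B=(\Delta_{m+1},\ldots,\Delta_n)$). The heart of the argument is the $r=2$ case, which packages the Newton-type log-concavity implied by Theorem \ref{thm-alexandrov-fenchel}.

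For $r=2$ with $k_1+k_2=m$, I set $a_i=V(i*\Delta_1,(m-i)*\Delta_2,B)$ for $0\le i\le m$. Applying Theorem \ref{thm-alexandrov-fenchel} to the first two slots, with the remaining $n-2$ arguments equal to $(i-1)*\Delta_1,(m-i-1)*\Delta_2,B$, yields the log-concavity relation $a_i^2\ge a_{i-1}a_{i+1}$ for $1\le i\le m-1$. When all $a_i$ are positive, concavity of the piecewise linear interpolant of $\log a_i$ gives $a_i^m\ge a_0^{m-i}a_m^i$; if $a_0$ or $a_m$ vanishes the required inequality is trivial, and the remaining borderline case (positive endpoints, some interior $a_i$ zero) is excluded by a short continuity/perturbation argument on $\Delta_1,\Delta_2$. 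Specializing $i=k_1$ produces
$$V(k_1*\Delta_1,k_2*\Delta_2,B)^m\ge V(m*\Delta_1,B)^{k_1}V(m*\Delta_2,B)^{k_2}. \quad (\ast)$$

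For the inductive step, assume the corollary for $r-1$ bodies. Put $m'=k_1+k_2$ and use $B''=(k_3*\Delta_3,\ldots,k_r*\Delta_r,B)$ as the background; applying $(\ast)$ with this background gives
$$V(k_1*\Delta_1,k_2*\Delta_2,B'')^{m'}\ge V(m'*\Delta_1,B'')^{k_1}V(m'*\Delta_2,B'')^{k_2}.$$
Each factor on the right is a mixed volume involving only $r-1$ distinct bodies (either $\Delta_1,\Delta_3,\ldots,\Delta_r$ or $\Delta_2,\Delta_3,\ldots,\Delta_r$, with total multiplicity $m$ in each case), so the inductive hypothesis yields $V(m'*\Delta_i,B'')^m\ge V(m*\Delta_i,B)^{m'}\prod_{j\ge 3}V(m*\Delta_j,B)^{k_j}$ for $i=1,2$. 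Raising the displayed inequality to the $m$-th power, substituting these bounds, and collecting exponents, one sees that the factor $m'$ appears identically on both sides and cancels, delivering the claimed inequality. I expect the main obstacle to be the positivity/perturbation step in the $r=2$ case; everything afterward is careful exponent bookkeeping.
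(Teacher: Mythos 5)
Your argument is correct, and the bookkeeping in the inductive step does close up as you claim: raising the $r=2$ inequality (with background $B''$) to the $m$-th power and inserting the two applications of the inductive hypothesis produces the target inequality raised to the power $m'=k_1+k_2$, which then cancels. The paper itself gives no proof of this corollary --- it is stated as a ``formal corollary'' of Alexandrov--Fenchel and all proofs are omitted in this survey note --- so there is nothing to compare against, but your deduction (log-concavity of $a_i=V(i*\Delta_1,(m-i)*\Delta_2,B)$ via Alexandrov--Fenchel, interpolation $a_i^m\ge a_0^{m-i}a_m^i$, then induction on $r$ by grouping $\Delta_1,\Delta_2$) is the standard one that the phrase ``formal corollary'' refers to.

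One small simplification: the borderline case you flag (positive endpoints $a_0,a_m>0$ but some interior $a_i=0$) cannot occur and needs no perturbation argument. Alexandrov--Fenchel gives $a_i^2\ge a_{i-1}a_{i+1}$ for every $i$ with no positivity hypothesis, and this log-concavity alone forces the support $\{i:a_i>0\}$ to be an interval: if $a_{i-1}>0$ and $a_i=0$ then $0=a_i^2\ge a_{i-1}a_{i+1}$ forces $a_{i+1}=0$, and propagating forward gives $a_m=0$. So either some endpoint vanishes (and the inequality is trivial) or all $a_i$ are positive (and the interpolation applies directly). This makes the $r=2$ case purely formal, which is in keeping with the spirit of the corollary.
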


\section{Brunn-Minkowski inequality}
The celebrated {\it Brunn-Minkowski inequality} concerns volume of
convex bodies in $\r^n$.

\begin{Th}[Brunn-Minkowski] \label{th-Brunn-Mink}
Let $\Delta_1$, $\Delta_2$ be convex bodies in $\r^n$. Then
$$\Vol^{1/n}(\Delta_1) + \Vol^{1/n}(\Delta_2)\leq
\Vol^{1/n}(\Delta_1+\Delta_2).$$
\end{Th}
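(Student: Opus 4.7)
The plan is to derive Brunn--Minkowski as a formal consequence of the corollary of the Alexandrov--Fenchel inequality stated in the previous section. The starting point is to expand the volume of the Minkowski sum as a polynomial in the mixed volumes. By the definition of mixed volume, the polynomial $P_{\Delta_1,\Delta_2}(\lambda_1,\lambda_2)=\Vol(\lambda_1\Delta_1+\lambda_2\Delta_2)$ is homogeneous of degree $n$, and its coefficients are (up to binomial factors) the repeated mixed volumes. Concretely,
$$\Vol(\Delta_1+\Delta_2)=\sum_{i=0}^{n}\binom{n}{i}\,V(i*\Delta_1,(n-i)*\Delta_2).$$
So it suffices to bound each term on the right below by the corresponding term of $(\Vol^{1/n}(\Delta_1)+\Vol^{1/n}(\Delta_2))^n$.

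For the bound on each term, I would apply the corollary of Alexandrov--Fenchel in the case $m=n$, $r=2$, with no trailing bodies $\Delta_{m+1},\dots,\Delta_n$, to the partition $k_1=i$, $k_2=n-i$. The right-hand side of that corollary collapses to $V^{i}(n*\Delta_1)\cdot V^{n-i}(n*\Delta_2)=\Vol(\Delta_1)^{i}\Vol(\Delta_2)^{n-i}$, while the left-hand side is $V(i*\Delta_1,(n-i)*\Delta_2)^{n}$. Taking $n$-th roots gives
$$V(i*\Delta_1,(n-i)*\Delta_2)\geq \Vol(\Delta_1)^{i/n}\Vol(\Delta_2)^{(n-i)/n}$$
for every $0\leq i\leq n$, which is exactly the inequality needed to match a binomial term.

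Plugging this into the expansion above yields
$$\Vol(\Delta_1+\Delta_2)\geq \sum_{i=0}^{n}\binom{n}{i}\Vol(\Delta_1)^{i/n}\Vol(\Delta_2)^{(n-i)/n}=\bigl(\Vol^{1/n}(\Delta_1)+\Vol^{1/n}(\Delta_2)\bigr)^{n},$$
by the binomial theorem, and extracting $n$-th roots produces the desired inequality.

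The main obstacle is really conceptual rather than technical: one has to notice that the corollary of Alexandrov--Fenchel, applied in the degenerate case where no ``extra'' bodies appear, is precisely the log-concavity statement $V(i*\Delta_1,(n-i)*\Delta_2)^{n}\geq \Vol(\Delta_1)^{i}\Vol(\Delta_2)^{n-i}$ that couples the $n+1$ mixed-volume coefficients of $P_{\Delta_1,\Delta_2}$ to its two end values $\Vol(\Delta_1)$ and $\Vol(\Delta_2)$. Once this is seen, the remainder is a one-line binomial calculation; no further convex-geometric input is required.
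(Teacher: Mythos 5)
Your argument is mathematically sound, and the algebra is clean, but it takes a genuinely different route from the paper's, and in the context of this paper the direction of implication matters. You derive Brunn--Minkowski as a corollary of Alexandrov--Fenchel, via the expansion $\Vol(\Delta_1+\Delta_2)=\sum_{i=0}^{n}\binom{n}{i}V(i*\Delta_1,(n-i)*\Delta_2)$ and the log-concavity bound $V(i*\Delta_1,(n-i)*\Delta_2)\geq\Vol(\Delta_1)^{i/n}\Vol(\Delta_2)^{(n-i)/n}$. The paper instead proves Theorem \ref{th-Brunn-Mink} \emph{independently} of Alexandrov--Fenchel: it first establishes the equivalent form \ref{th-Brunn-Mink-alt-form} (concavity of $V_\Delta^{1/n}(h)$ for sections of a convex $\Delta\subset\r^{n+1}$) by a Steiner-symmetrization and compactness argument, then specializes to the convex combination $\Delta=\{(x,h)\mid 0\le h\le 1,\ x\in h\Delta_1+(1-h)\Delta_2\}$. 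This independence is not cosmetic: a central point of the paper is to obtain Alexandrov--Fenchel \emph{from} the isoperimetric inequality (the $n=2$ case of Brunn--Minkowski) via the intersection-theoretic machinery of Theorem \ref{th-main}, so an Alexandrov--Fenchel--based proof of Brunn--Minkowski would render that program circular. What your approach buys is brevity and a pleasant formal calculation; what the paper's approach buys is an elementary, self-contained proof that keeps Brunn--Minkowski logically prior to Alexandrov--Fenchel. One small technical point: the paper's corollary requires $k_i\in\n$ with $k_1+\dots+k_r=m$, so the boundary terms $i=0$ and $i=n$ in your sum are not covered by it; they hold trivially with equality ($V(n*\Delta_j)=\Vol(\Delta_j)$), but you should say so explicitly.
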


The inequality was first found and proved by Brunn around the end of 19th
century in the following form.
\begin{Th} \label{th-Brunn-Mink-alt-form}
Let $V_\Delta(h)$ be the $n$-dimensional volume of the section
$x_{n+1}=h$ of a convex body $\Delta\subset \r^{n+1}$. Then
$V_\Delta^{1/n}(h)$ is a  concave function in $h$.
\end{Th}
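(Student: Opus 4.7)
The plan is to derive Theorem \ref{th-Brunn-Mink-alt-form} directly from the Brunn-Minkowski inequality (Theorem \ref{th-Brunn-Mink}), using the convexity of $\Delta$ to relate slices at different heights via Minkowski addition. Concretely, fix heights $h_1, h_2 \in \r$ at which $\Delta$ has nonempty sections, and set $\Delta_i = \{x \in \r^n \mid (x, h_i) \in \Delta\}$ for $i = 1, 2$. Each $\Delta_i$ is convex and compact (being a section of the convex body $\Delta$ by a hyperplane), so it is itself a convex body in $\r^n$ to which Theorem \ref{th-Brunn-Mink} applies.

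The key geometric step is the inclusion
$$t \Delta_1 + (1-t) \Delta_2 \; \subset \; \{x \in \r^n \mid (x,\, t h_1 + (1-t) h_2) \in \Delta\}$$
for any $t \in [0,1]$. This is immediate from convexity of $\Delta$: if $x \in \Delta_1$ and $y \in \Delta_2$, then $(x, h_1)$ and $(y, h_2)$ lie in $\Delta$, hence so does their convex combination $(t x + (1-t) y,\, t h_1 + (1-t) h_2)$. Taking $n$-dimensional volumes, this inclusion yields
$$V_\Delta\bigl(t h_1 + (1-t) h_2\bigr) \;\geq\; \Vol\bigl(t \Delta_1 + (1-t) \Delta_2\bigr).$$

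Next I apply Brunn-Minkowski to the two convex bodies $t \Delta_1$ and $(1-t) \Delta_2$ in $\r^n$. Since $\Vol$ is homogeneous of degree $n$, we have $\Vol^{1/n}(\lambda \Delta_i) = \lambda \Vol^{1/n}(\Delta_i)$ for $\lambda \geq 0$, and Theorem \ref{th-Brunn-Mink} gives
$$\Vol^{1/n}\bigl(t \Delta_1 + (1-t) \Delta_2\bigr) \;\geq\; t\, \Vol^{1/n}(\Delta_1) + (1-t)\, \Vol^{1/n}(\Delta_2) \;=\; t\, V_\Delta(h_1)^{1/n} + (1-t)\, V_\Delta(h_2)^{1/n}.$$
Combining the two displayed inequalities yields $V_\Delta^{1/n}\bigl(t h_1 + (1-t) h_2\bigr) \geq t V_\Delta^{1/n}(h_1) + (1-t) V_\Delta^{1/n}(h_2)$, which is the concavity of $V_\Delta^{1/n}$.

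There is no serious obstacle: the one point requiring a sentence of care is the boundary behavior, namely that the set of heights $h$ at which $V_\Delta(h) > 0$ is a (possibly degenerate) interval, and that on the complementary heights $V_\Delta \equiv 0$ so the concavity inequality holds trivially. Thus Theorem \ref{th-Brunn-Mink-alt-form} is essentially a formal consequence of Theorem \ref{th-Brunn-Mink} plus the slicing observation above. (Conversely, one recovers Theorem \ref{th-Brunn-Mink} from Theorem \ref{th-Brunn-Mink-alt-form} by forming, for given $\Delta_1, \Delta_2 \subset \r^n$, the convex hull in $\r^{n+1}$ of $\Delta_1 \times \{0\}$ and $\Delta_2 \times \{1\}$; I will mention this equivalence only in passing.)
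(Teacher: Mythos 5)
Your argument is mathematically correct as an implication, but it runs in the wrong direction for the paper's logical structure, and within that structure it is circular. Immediately before the proof of Theorem \ref{th-Brunn-Mink-alt-form}, the paper explains how Theorem \ref{th-Brunn-Mink} is \emph{deduced from} Theorem \ref{th-Brunn-Mink-alt-form} (taking $\Delta\subset\r^{n+1}$ to be the ``convex combination'' of $\Delta_1\times\{1\}$ and $\Delta_2\times\{0\}$, exactly the construction you mention parenthetically at the end). Theorem \ref{th-Brunn-Mink} is never established independently in the paper, so invoking it to prove Theorem \ref{th-Brunn-Mink-alt-form} --- which is what your slicing argument does --- only reestablishes the equivalence of the two statements, not the truth of either. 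The content you need is a self-contained proof.

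The paper's actual proof is a Steiner symmetrization argument: (1) the claim is immediate when $\Delta$ is rotationally symmetric about the $x_{n+1}$-axis; (2) Steiner symmetrization with respect to a hyperplane through the $x_{n+1}$-axis preserves the section volumes $V_\Delta(h)$ and keeps the body convex; (3)--(4) the closure $C(\Delta)$, in the Hausdorff metric, of all bodies obtainable from $\Delta$ by finitely many such symmetrizations is compact; (5)--(7) minimizing the symmetric-difference volume $f(\Delta_1) = V(\Delta_1\setminus\Delta_{rot}) + V(\Delta_{rot}\setminus\Delta_1)$ over $C(\Delta)$ (where $\Delta_{rot}$ is the rotationally symmetric body with the same section volumes) forces the minimizer to equal $\Delta_{rot}$, since otherwise one more symmetrization strictly decreases $f$; hence $\Delta_{rot}$ is convex and step (1) applies. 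This is elementary and does not presuppose Brunn--Minkowski. To repair your write-up, either reproduce an argument of this kind, or first supply an independent proof of Theorem \ref{th-Brunn-Mink} (e.g.\ the Hadwiger--Ohmann induction on unions of boxes) and then keep your slicing deduction, making explicit that you are not relying on the paper's derivation of \ref{th-Brunn-Mink} from \ref{th-Brunn-Mink-alt-form}.
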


To obtain Theorem \ref{th-Brunn-Mink} from Theorem \ref{th-Brunn-Mink-alt-form},
one takes $\Delta \subset \r^{n+1}$ to be the convex combination of $\Delta_1$ and $\Delta_2$, i.e.
$$\Delta = \{(x, h) \mid 0 \leq h \leq 1,~ x \in h\Delta_1 + (1-h)\Delta_2 \}.$$
The concavity of the function $$
V_\Delta(h) = \Vol(h \Delta_1 + (1-h) \Delta_2),$$ then readily implies Theorem \ref{th-Brunn-Mink}.




For $n=2$ Theorem \ref{th-Brunn-Mink-alt-form} is equivalent to the
Alexandrov-Fenchel inequality (see Theorem \ref{th-isoper}).
Below we give a sketch of its proof in the general case.
\begin{proof}[Sketch of proof of Theorem \ref{th-Brunn-Mink-alt-form}]
1) When the convex body $\Delta \subset \r^{n+1}$ is rotationally
symmetric with respect to the $x_{n+1}$-axis, Theorem
\ref{th-Brunn-Mink-alt-form} is obvious.

2) Now suppose $\Delta$ is not rotationally symmetric.
Fix a hyperplane $H$ containing the $x_{n+1}$-axis. Then one can
construct a new convex body $\Delta'$ which is symmetric with
respect to the hyperplane $H$ and such that the volume of sections
of $\Delta'$ is the same as that of $\Delta$. To do this, just think
of $\Delta$ as the union of line segments perpendicular to the plane
$H$. Then shift each segment, along its line, in such a way that its
center lies on $H$. The resulting body is then symmetric with
respect to $H$ and has the same volume of sections as $\Delta$.
The above construction is called  the {\it Steiner symmetrization process}.

3) Consider the set of all convex bodies inside a bounded closed
domain equipped with the Hausdorff metric. One checks that this
set is compact.

4) Take the collection of all convex bodies that can be obtained
from $\Delta$ by a finite number of symmetrizations with respect to
hyperplanes $H$ containing the $x_{n+1}$-axis. The closure
$C(\Delta)$ of this collection with respect to the Hausdorff metric
is compact.

5) Take the body $\Delta_{rot}$ which is rotationally symmetric with
respect to the $x_{n+1}$-axis and with the following
property: the volume of any section of $\Delta_{rot}$ by a
horizontal hyperplane is equal to the volume of the section of
$\Delta$ by the same hyperplane. A priori we do not know that
$\Delta_{rot}$ is convex.

6) Consider the continuous function $f$ on $C(\Delta)$ defined by
$$f(\Delta_1)= V(\Delta_1\setminus \Delta_{rot})+V(
\Delta_{rot}\setminus\Delta_1),$$  for any $\Delta_1\in C(X)$.
Since $f$ is continuous it has a minimum. Take a
body $\Delta_0\in C(\Delta)$ at which $f$ attains a minimum. Let us
show $\Delta_0=\Delta_{rot}$.

7) If $\Delta_0\neq \Delta_{rot}$ then there is a horizontal
hyperplane $L$ and points $a,b\in L$ such that $a\in
\Delta_{rot}$, $a \notin \Delta_0$ and $b \notin \Delta_{rot}$, $b\in
\Delta_0$ (note that $V(\Delta_{rot}\cap L)= V(\Delta_{0}\cap L$).
Consider the line $\ell$ passing through the points $a$ and $b$. Take
the hyperplane $H$ orthogonal to $\ell$ and containing $x_{n+1}$-axis.
Denote by $\Delta_0'$ the result of the Steiner
symmetrization of $\Delta_0$ with respect to $H$.
It is easy to check that $f(\Delta_0')<f(\Delta_0)$, which
contradict the minimality of $f(\Delta_0)$. The contradiction proves
that $\Delta_0=\Delta_{rot}$ and thus $\Delta_{rot}$ is convex.
By Step 1) we then have the required inequality and the proof is finished.
\end{proof}

\section{Brunn-Minkowski and Alexandrov-Fenchel inequalities}
Let us  remind the classical {\it isoperimetric inequality} whose origins
date back to the antiquity. According to
this inequality if $P$ is the perimeter of a simple closed curve in the plane and
$A$ is the area enclosed by the curve then
\begin{equation} \label{equ-isoper}
4\pi A \leq P^2.
\end{equation}

The equality is obtained when the curve is a circle. To prove
(\ref{equ-isoper}) it is enough to prove it for convex regions. The
Alexandrov-Fenchel inequality for $n=2$ implies the isoperimetric inequality
(\ref{equ-isoper}) as a particular case and hence has inherited the name.

\begin{Th}[Isoperimetric inequality] \label{th-isoper}
If $\Delta_1$ and $\Delta_2$ are convex regions in the plane then
$$\textup{Area}(\Delta_1)\textup{Area}(\Delta_2) \leq
A(\Delta_1, \Delta_2)^2,$$ where $A(\Delta_1, \Delta_2)$ is the mixed area.
\end{Th}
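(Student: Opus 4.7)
The plan is to derive the inequality directly from the two-dimensional case of the Brunn--Minkowski inequality (Theorem \ref{th-Brunn-Mink}) together with the polynomial expansion of area, as was indicated in the remark preceding the statement.

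First, I would specialize the multilinearity property of mixed volumes to $n=2$: for any nonnegative scalars $\lambda_1,\lambda_2$ and any convex regions $\Delta_1,\Delta_2\subset\r^2$ one has the quadratic expansion
$$\textup{Area}(\lambda_1\Delta_1+\lambda_2\Delta_2)=\lambda_1^2\,\textup{Area}(\Delta_1)+2\lambda_1\lambda_2\,A(\Delta_1,\Delta_2)+\lambda_2^2\,\textup{Area}(\Delta_2),$$
where $A(\Delta_1,\Delta_2)$ is the mixed area (this is just the definition from Section 2 in dimension $2$, together with the symmetry property (i)).

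Next, I apply Brunn--Minkowski in $\r^2$: taking square roots of both sides of Theorem \ref{th-Brunn-Mink} and squaring again gives
$$\textup{Area}(\lambda_1\Delta_1+\lambda_2\Delta_2)\geq\bigl(\lambda_1\,\textup{Area}(\Delta_1)^{1/2}+\lambda_2\,\textup{Area}(\Delta_2)^{1/2}\bigr)^2,$$
after first rescaling $\Delta_i$ by $\lambda_i$ and using homogeneity $\textup{Area}(\lambda_i\Delta_i)=\lambda_i^2\,\textup{Area}(\Delta_i)$. Expanding the right-hand side yields
$$\textup{Area}(\lambda_1\Delta_1+\lambda_2\Delta_2)\geq\lambda_1^2\,\textup{Area}(\Delta_1)+2\lambda_1\lambda_2\sqrt{\textup{Area}(\Delta_1)\,\textup{Area}(\Delta_2)}+\lambda_2^2\,\textup{Area}(\Delta_2).$$

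Comparing this with the quadratic expansion above, the pure $\lambda_1^2$ and $\lambda_2^2$ terms cancel, and choosing any $\lambda_1,\lambda_2>0$ leaves
$$A(\Delta_1,\Delta_2)\geq\sqrt{\textup{Area}(\Delta_1)\,\textup{Area}(\Delta_2)}.$$
Squaring gives the asserted inequality $\textup{Area}(\Delta_1)\,\textup{Area}(\Delta_2)\leq A(\Delta_1,\Delta_2)^2$.

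There is no real obstacle here: the entire content sits inside Brunn--Minkowski (which itself was proved in the previous section via Steiner symmetrization). The only point worth double-checking is the equivalence of the two forms of Brunn--Minkowski used, i.e.\ that the inequality $\textup{Vol}^{1/n}(\Delta_1)+\textup{Vol}^{1/n}(\Delta_2)\leq\textup{Vol}^{1/n}(\Delta_1+\Delta_2)$ combined with the homogeneity of volume yields, for each fixed pair $(\lambda_1,\lambda_2)$, the desired pointwise lower bound on $\textup{Vol}(\lambda_1\Delta_1+\lambda_2\Delta_2)$; this is immediate from applying the stated inequality to the pair $(\lambda_1\Delta_1,\lambda_2\Delta_2)$.
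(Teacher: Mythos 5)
Your proof is correct and follows essentially the same route as the paper: expand $\Area$ of a Minkowski sum quadratically via bilinearity of the mixed area, apply Brunn--Minkowski in the plane, and compare cross terms. The paper simply sets $\lambda_1=\lambda_2=1$ from the start, so your extra scalars are a harmless generalization that cancels at the end.
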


When $\Delta_2$ is the unit disc in the plane, $A(\Delta_1,
\Delta_2)$ is $1/2$ times the perimeter of $\Delta_1$. Thus the
classical form (\ref{equ-isoper}) of the inequality (for convex
regions) follows from Theorem \ref{th-isoper}.

\begin{proof}[Proof of Theorem \ref{th-isoper}]
It is easy to verify that the isoperimetric inequality is equivalent to
the Brunn-Minkowski for n=2. Let us check this in one direction, i.e.
the isoperimetric inequality follows from  Brunn-Minkowski for $n=2$:
\begin{eqnarray*}
\Area(\Delta_1) +2A(\Delta_1, \Delta_2) +\Area(\Delta_2)
&=& \Area(\Delta_1 + \Delta_2) \cr
&\geq& (\Area^{1/2}(\Delta_1)+\Area^{1/2}(\Delta_2))^2 \cr
&=& \Area(\Delta_1)+2\Area(\Delta_1)^{1/2}\Area(\Delta_2)^{1/2} \cr &+& \Area(\Delta_2), \cr
\end{eqnarray*}
which readily implies the isoperimetric inequality.
\end{proof}

The following generalization of Brunn-Minkowski inequality
is a corollary of Alexandrov-Fenchel inequality.
\begin{Cor}  (Generalized Brunn-Minkowski inequality)
For any $0<m\leq n$ and for any fixed convex bodies
$\Delta_{m+1},\dots, \Delta_n$, the function $F$ which assigns to a
body $\Delta$, the number $F(\Delta) =
V^{1/m}(m*\Delta,\Delta_{m+1},\dots,\Delta_n),$ is concave, i.e. for
any two convex bodies $\Delta_1, \Delta_2$ we have
$$F(\Delta_1)+F(\Delta_2)\leq F(\Delta_1+\Delta_2).$$
\end{Cor}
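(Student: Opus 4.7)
The plan is to derive the concavity of $F$ directly from the Alexandrov-Fenchel corollary stated above, by expanding $V(m*(\Delta_1+\Delta_2),\Delta_{m+1},\dots,\Delta_n)$ via multilinearity and recognizing the result as a binomial sum.

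First I would use the multilinearity of mixed volume (property (ii)) to expand
\begin{equation*}
V(m*(\Delta_1+\Delta_2),\Delta_{m+1},\dots,\Delta_n)=\sum_{k=0}^{m}\binom{m}{k} V(k*\Delta_1,(m-k)*\Delta_2,\Delta_{m+1},\dots,\Delta_n).
\end{equation*}
Here the binomial coefficient arises by symmetry of mixed volume (property (i)) when collecting equal terms.

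Next I would bound each summand from below using the corollary of Alexandrov-Fenchel displayed just after Theorem \ref{thm-alexandrov-fenchel}, applied with $r=2$, $k_1=k$, $k_2=m-k$. That corollary gives
\begin{equation*}
V^m(k*\Delta_1,(m-k)*\Delta_2,\Delta_{m+1},\dots,\Delta_n) \geq V^{k}(m*\Delta_1,\Delta_{m+1},\dots,\Delta_n)\, V^{m-k}(m*\Delta_2,\Delta_{m+1},\dots,\Delta_n),
\end{equation*}
which, after taking $m$-th roots and recalling $F(\Delta_i)=V^{1/m}(m*\Delta_i,\Delta_{m+1},\dots,\Delta_n)$, is precisely
\begin{equation*}
V(k*\Delta_1,(m-k)*\Delta_2,\Delta_{m+1},\dots,\Delta_n)\;\geq\; F(\Delta_1)^{k}\,F(\Delta_2)^{m-k}.
\end{equation*}
(For $k=0$ or $k=m$ this holds with equality and is just the definition of $F$.)

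Finally, I would substitute these bounds back into the binomial expansion and recognize the right-hand side as a binomial power:
\begin{equation*}
F(\Delta_1+\Delta_2)^{m}=V(m*(\Delta_1+\Delta_2),\dots)\;\geq\;\sum_{k=0}^{m}\binom{m}{k}F(\Delta_1)^{k}F(\Delta_2)^{m-k}=\bigl(F(\Delta_1)+F(\Delta_2)\bigr)^{m}.
\end{equation*}
Taking $m$-th roots (noting that mixed volumes are nonnegative, so all quantities are in $\r_{\geq 0}$) yields the desired concavity $F(\Delta_1)+F(\Delta_2)\leq F(\Delta_1+\Delta_2)$. The main obstacle is purely bookkeeping: making sure the partition $k+(m-k)=m$ matches the hypotheses of the corollary and that the boundary cases $k=0,m$ are handled consistently; once Alexandrov-Fenchel is granted in its polarized (corollary) form, the argument is a one-line binomial identity.
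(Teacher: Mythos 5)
Your argument is correct. The paper itself gives no proof of this corollary --- it simply asserts that it follows from Alexandrov--Fenchel --- so your derivation fills in the omitted reasoning, and it is the standard one. To recap the substance: by symmetry and additivity in each slot,
\begin{equation*}
V\bigl(m*(\Delta_1+\Delta_2),\Delta_{m+1},\dots,\Delta_n\bigr)=\sum_{k=0}^{m}\binom{m}{k}V\bigl(k*\Delta_1,(m-k)*\Delta_2,\Delta_{m+1},\dots,\Delta_n\bigr),
\end{equation*}
and the polarized Alexandrov--Fenchel corollary with $r=2$, $(k_1,k_2)=(k,m-k)$ gives, after taking $m$-th roots,
\begin{equation*}
V\bigl(k*\Delta_1,(m-k)*\Delta_2,\Delta_{m+1},\dots,\Delta_n\bigr)\geq F(\Delta_1)^{k}F(\Delta_2)^{m-k},
\end{equation*}
so the binomial theorem yields $F(\Delta_1+\Delta_2)^m\geq\bigl(F(\Delta_1)+F(\Delta_2)\bigr)^m$. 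Two small bookkeeping points you handle correctly but should state: the corollary is stated for $2\leq m$ and strictly positive $k_i$, so the endpoint terms $k=0,m$ must be read as tautologies (they equal $F(\Delta_2)^m$ and $F(\Delta_1)^m$ by definition), and the case $m=1$ of the statement is immediate from additivity alone with equality, requiring no appeal to Alexandrov--Fenchel. With those caveats the argument is complete.
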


On the other hand, the usual proof of Alexandrov-Fenchel inequality
deduces it from the Brunn-Minkowski inequality.
But this deduction is the main part (and
the most complicated part) in the proof (\cite{Burago-Zalgaller}).
Interestingly, The main construction in the present paper (using algebraic geometry)
allows us to obtain Alexandrov-Fenchel inequality as an immediate corollary of the
simplest case of the Brunn-Minkowski, i.e. isoperimetric inequality.

\section {Generic systems of Laurent polynomial equations in
$(\c^*)^n$} \label{subsec-B-K}
In this section we recall the
famous results due to Ku\v{s}nirenko and Bernstein on the number of
solutions of a generic system of polynomials in $(\c^*)^n$.

Let us identify the lattice $\z^n$ with {\it Laurent monomials} in
$(\c^*)^n$: to each integral point $k\in \z^n$,
$k=(k_1,\dots,k_n)$ we associate the monomial $z^k=z_1^{k_1}\dots
z_n^{k_n}$ where $z=(z_1, \ldots, z_n)$.
A {\it Laurent polynomial} $P=\sum_k c_k z^k$ is a finite
linear combination of Laurent monomials with complex coefficients.
The {\it support} $\supp(P)$ of a Laurent polynomial $P$, is the set
of exponents $k$ for which $c_k \neq 0$. We denote the convex hull
of a finite set $A\subset \z^n$ by $\Delta_A \subset \r^n$.
The {\it Newton polytope} $\Delta (P)$ of a Laurent polynomial $P$
is the convex hull $\Delta_{\supp(P)}$ of its support. With each
finite set $A\subset \z^n$ one associates a vector space
$L_A$ of Laurent polynomials $P$ with $\supp(P) \subset A$.

\begin{Def}
We say that a property holds for a {\it generic element} of a vector
space $L$ if there is a proper algebraic set $\Sigma$ such that the
property holds for all elements in $L \setminus \Sigma$.
\end{Def}

\begin{Def}  For a given $n$-tuple of finite sets
$A_1,\dots,A_n \subset \z^n$  the {\it intersection index} of
the $n$-tuple of spaces  $[L_{A_1}\dots, L_{A_n}]$ is the number of
solutions in $(\c^*)^n $ of a generic system of equations
$P_1=\dots=P_n=0$, where $P_1\in L_1,\dots, P_n\in L_n$.
\end {Def}

\noindent {\bf Problem:} {\it Find the intersection index
$[L_{A_1}\dots, L_{A_n}]$}, that is, for a generic element $(P_1,
\ldots, P_n) \in L_{A_1} \times\dots\times L_{A_n}$ find a formula
for the number of solutions in $(\c^*)^n $ of the system of
equations $P_1=\dots=P_n=0$.

Ku\v{s}nirenko found the following important result which answers a particular case of the above problem
\cite{Kushnirenko}:

\begin{Th} \label{th-Kush} When the convex hulls of the sets $A_i$ are the same
and equal to a polytope $\Delta$ we have $$[L_{A_1}, \dots, L_{A_n}]=n! \Vol(\Delta),$$
where $\Vol$ is the standard $n$-dimensional volume in $\r^n$.
\end{Th}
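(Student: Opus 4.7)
The plan is to identify $[L_{A_1}, \ldots, L_{A_n}]$ with the top self-intersection number $(\L_\Delta)^n$ of the polarizing ample line bundle on the projective toric variety $X_\Delta$ associated to $\Delta$. The subtlety here is that the $A_i$ may be genuinely different sets, so one cannot simply invoke Bertini for the \emph{full} linear system $H^0(X_\Delta,\L_\Delta)$; the common hypothesis $\Delta_{A_i}=\Delta$ is precisely what is needed to control each $L_{A_i}$ separately on every torus-orbit stratum.

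First I would recall the toric setup: $X_\Delta$ is a projective compactification of $(\c^*)^n$ with an ample line bundle $\L_\Delta$ whose global sections carry a distinguished basis $\{\chi^a : a\in \Delta\cap\z^n\}$, and for which the classical Ehrhart-polynomial calculation gives $(\L_\Delta)^n=n!\Vol(\Delta)$. Each $L_{A_i}$ embeds as the subspace of $H^0(X_\Delta,\L_\Delta)$ spanned by the characters $\chi^a$ with $a\in A_i$. The key elementary observation is that $\Delta_{A_i}=\Delta$ forces $A_i$ to contain every vertex of $\Delta$ (a vertex of $\Delta$ is never a non-trivial convex combination of other lattice points of $\Delta$). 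Consequently, for \emph{every} face $F$ of $\Delta$, the set $A_i\cap F$ still has convex hull equal to $F$, and in particular is non-empty.

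The heart of the argument is a stratified Bertini computation on $X_\Delta=\bigsqcup_F O_F$. For a face $F$ of dimension $d<n$: the restriction of a generic $P_i\in L_{A_i}$ to the orbit closure $\overline{O_F}$ is a nonzero section of the induced polarization (nonzero precisely because $A_i\cap F\neq\emptyset$), hence cuts $\overline{O_F}$ in a proper divisor. Intersecting all $n$ such generic divisors on the $d$-dimensional $\overline{O_F}$ gives expected dimension $d-n<0$, so there are no common zeros on any boundary orbit. On the open orbit ($F=\Delta$, $d=n$) generic transversality produces a finite reduced intersection whose cardinality, by global intersection theory on the complete variety $X_\Delta$, equals $(\L_\Delta)^n$. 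Combining, $[L_{A_1},\ldots,L_{A_n}]=(\L_\Delta)^n=n!\Vol(\Delta)$.

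The main obstacle is this stratified Bertini step carried out simultaneously for $n$ possibly \emph{distinct} proper subspaces $L_{A_i}\subsetneq H^0(X_\Delta,\L_\Delta)$; the argument is inductive on $\dim\Delta$, since on each boundary stratum $\overline{O_F}$ one faces the same problem one dimension lower with sets $A_i\cap F$ that, by the key observation, again satisfy the hypothesis $\Delta_{A_i\cap F}=F$. Once this inductive Bertini is in place, the identification of the generic count in $(\c^*)^n$ with $(\L_\Delta)^n$ and the evaluation $(\L_\Delta)^n=n!\Vol(\Delta)$ are routine.
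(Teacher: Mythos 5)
Your proof is correct but takes a genuinely different route from the paper's. The paper (Section~\ref{sec-proof-B-K}) proves Ku\v{s}nirenko's theorem by a Hilbert-function argument: one observes that $\dim L_A^k = \#(k*A)$, applies the combinatorial estimate (Theorem~\ref{th-A}) to get $\#(k*A)/k^n \to \Vol_n(\Delta(A))$, and then invokes Hilbert's theorem (Theorem~\ref{th-Hil}) to read the self-intersection number $[L_A,\dots,L_A]=\deg Y_{L_A}$ off the leading coefficient of the Hilbert polynomial; the version with distinct $A_i$ of equal convex hull is then recovered from the ``completion'' invariance $[L_A,\dots]=[L_{\overline A},\dots]$. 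You instead work directly on the toric compactification $X_\Delta$: the key observation that $\Delta_{A_i}=\Delta$ forces every vertex of $\Delta$ into $A_i$ shows that each $L_{A_i}$ is base-point free on all of $X_\Delta$, and an induction over faces then shows that generic common zeros avoid the boundary, so the count equals $(\mathcal{L}_\Delta)^n = n!\Vol(\Delta)$.

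Each approach buys something. Your stratified Bertini argument handles the $n$-tuple of possibly distinct $A_i$ in one stroke and is geometrically transparent, but it is wedded to the toric picture: it needs the explicit orbit stratification of $X_\Delta$ and the identity $(\mathcal{L}_\Delta)^n=n!\Vol(\Delta)$, so it does not obviously extend beyond the torus. The paper's Hilbert-function route is slightly less direct for the tuple version (it first does the cube $[L_A,\dots,L_A]$ and then appeals to completion), but it is precisely the argument that generalizes: replacing the count $\#(k*A)$ by the dimension of a graded piece of a general semigroup $S(L)\subset \n\oplus\z^n$ is what drives Theorem~\ref{th-main} for arbitrary quasi-projective $X$ and arbitrary $L\in{\bf K}_{rat}(X)$, and that generalization is the whole point of the paper. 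One technical remark on your write-up: the phrase ``generic transversality'' on the open orbit deserves a word of justification --- it follows either from base-point-freeness of each $L_{A_i}$ (which your vertex observation delivers) together with Bertini, or from the standard submersion argument on $(\c^*)^n\times\prod L_{A_i}$ --- but the gap is routine to fill.
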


According to Theorem \ref{th-Kush} if $P_1, \dots, P_n$ are
sufficiently general Laurent polynomials with given Newton polytope
$\Delta$, the number of solutions in $(\c^*)^n $ of the system
$P_1=\dots=P_n=0$ is equal to $n!\Vol(\Delta)$.

The problem was solved by Bernstein in full generality \cite{Bernstein}:
\begin{Th} \label{th-Bern} In the general case, i.e. for arbitrary
finite subsets $A_1, \ldots, A_n \subset \z^n$ we have
$$[L_{A_1}, \dots, L_{A_n}]=n!
V(\Delta_{A_1}, \ldots, \Delta_{A_n}),$$ where $V$ is the mixed
volume of convex bodies in $\r^n$.
\end{Th}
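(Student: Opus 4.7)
The strategy is to recognize the normalized intersection index as the polarization of the volume polynomial and then invoke the uniqueness statement following property (iii) in Section 2. Concretely, define
$$F(\Delta_{A_1},\ldots,\Delta_{A_n}) \;=\; \frac{1}{n!}\,[L_{A_1},\ldots,L_{A_n}]$$
for $n$-tuples of lattice polytopes. The plan is to show $F$ is (a) symmetric, (b) well-defined (depending only on the Newton polytopes), (c) multilinear with respect to Minkowski sum, and (d) agrees with $\Vol$ on the diagonal. Then by uniqueness of polarization, $F$ must coincide with the mixed volume $V$, giving Theorem \ref{th-Bern}.

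Symmetry (a) is immediate from the definition. For (b), one shows that enlarging $A_i$ to include additional lattice points in the same convex hull $\Delta_{A_i}$ does not change the generic intersection index: one inclusion follows by specializing the extra coefficients to zero (semicontinuity of the generic count), and Kushnirenko's theorem applied to an auxiliary diagonal count gives the reverse inequality. For (d), Theorem \ref{th-Kush} directly yields $F(\Delta,\ldots,\Delta)=\Vol(\Delta)$.

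The key step is (c), multilinearity in the first argument with respect to Minkowski sum of supports. Given $A_1,A_1'\subset\z^n$, consider the set $A_1+A_1'=\{a+a'\mid a\in A_1,\,a'\in A_1'\}$, whose convex hull is $\Delta_{A_1}+\Delta_{A_1'}$. If $P_1\in L_{A_1}$ and $P_1'\in L_{A_1'}$ are generic, then $P_1P_1'\in L_{A_1+A_1'}$, and for generic $P_2,\ldots,P_n$ the zero set of the system $P_1P_1'=P_2=\cdots=P_n=0$ in $\Tor$ is the disjoint union of the zero sets of $\{P_1=P_2=\cdots=P_n=0\}$ and $\{P_1'=P_2=\cdots=P_n=0\}$. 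A density/specialization argument (decomposable polynomials form a constructible subset of $L_{A_1+A_1'}$ on which the generic count is achieved) then gives
$$[L_{A_1+A_1'},L_{A_2},\ldots,L_{A_n}] \;=\; [L_{A_1},L_{A_2},\ldots,L_{A_n}] + [L_{A_1'},L_{A_2},\ldots,L_{A_n}].$$
Scaling $A_1\mapsto kA_1$ similarly gives homogeneity, so $F$ extends to rational-vertex polytopes and, by Hausdorff continuity, to all convex bodies.

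The main obstacle is step (c). The subtlety is that a generic element of $L_{A_1+A_1'}$ is not a product $P_1P_1'$, so one must argue that restricting the generic count to the subvariety of reducible polynomials does not change it. This relies on an upper-semicontinuity argument for the number of isolated solutions together with the fact that, for a generic product $P_1P_1'$, the combined system has exactly $[L_{A_1},L_{A_2},\ldots,L_{A_n}] + [L_{A_1'},L_{A_2},\ldots,L_{A_n}]$ transverse solutions; the generic count over all of $L_{A_1+A_1'}$ is then squeezed between this value from below (by density) and the same value from above (by the additive structure imposed by any nearby reducible deformation). Once (c) is established, the rest follows from the uniqueness of the mixed volume as a symmetric multilinear extension of $\Vol$.
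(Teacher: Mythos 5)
Your high-level strategy --- recognize $\frac{1}{n!}[L_{A_1},\ldots,L_{A_n}]$ as the polarization of the volume polynomial, verify symmetry, multilinearity, and agreement with $\Vol$ on the diagonal (which is Ku\v{s}nirenko), and conclude by uniqueness of the polarization --- is exactly the route the paper takes. Section \ref{sec-proof-B-K} states it in one line: Bernstein follows from Ku\v{s}nirenko together with the identity $L_{A+B}=L_AL_B$, the unstated ingredient being the multilinearity of the intersection index (Theorem \ref{th-multi-lin-int-index}), which the paper develops as part of its general intersection theory and proves in \cite{Askold-Kiumars-arXiv2}. So the skeleton is the same.

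Where you differ is that you try to supply a direct, self-contained argument for the multilinearity step (c), and there is a genuine gap there. The lower bound direction is fine: for generic $(P_1,P_1',P_2,\ldots,P_n)$ the system $P_1P_1'=P_2=\dots=P_n=0$ has $[L_{A_1},\ldots]+[L_{A_1'},\ldots]$ simple transverse solutions, and perturbing the reducible $P_1P_1'$ to a nearby generic $P\in L_{A_1+A_1'}$ keeps at least that many. But your upper bound --- the phrase ``the same value from above (by the additive structure imposed by any nearby reducible deformation)'' --- is not an argument. The generic element of $L_{A_1+A_1'}$ is not reducible, and a priori the generic count could strictly exceed the reducible count (extra solutions could appear once you leave the proper subvariety of products). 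The standard ways to close this are: pass to a toric compactification compatible with all the Newton polytopes and use conservation of intersection numbers of Cartier divisors on a complete variety; or invoke the well-definedness and multilinearity of $[\,\cdot\,,\ldots,\,\cdot\,]$ from the general theory, which is precisely what the paper does via Theorem \ref{th-multi-lin-int-index}. As written, your step (c) does not establish the equality, only the inequality $\geq$.

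Two smaller points. Your step (b) --- that the generic count depends only on the convex hulls $\Delta_{A_i}$ --- is not needed as a hypothesis: it is a consequence of the theorem itself (mixed volume depends only on the hulls), and as stated your justification (``Ku\v{s}nirenko applied to an auxiliary diagonal count gives the reverse inequality'') is circular because relating off-diagonal indices to diagonal ones already uses multilinearity. Also, the extension ``to rational-vertex polytopes and, by Hausdorff continuity, to all convex bodies'' at the end of (c) is unnecessary: Bernstein's theorem is an identity for lattice polytopes, and once $F$ is symmetric, Minkowski-multilinear on lattice polytopes, and equals $\Vol$ on the diagonal, the inclusion-exclusion polarization formula for mixed volume yields $F=V$ on lattice polytopes directly.
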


According to Theorem \ref{th-Bern} if  $P_1, \dots,P_n$ are
sufficiently general Laurent polynomials with Newton polyhedra
$\Delta_1, \ldots, \Delta_n$ respectively, the number of solutions
in $(\c^*)^n $ of the system $P_1=\dots=P_n=0$ is equal to
$n!V(\Delta_1, \ldots, \Delta_n)$.

\section{Convex Geometry and  Bernstein-Ku\v{s}nirenko theorem}
\label{subsec-B-K-convex-geo}
Let us examine Theorem \ref {th-Bern} (which we will call
Bernstein-Ku\v{s}nirenko theorem) more closely.
In the space of regular functions
on $(\c^*)^n$ there is a natural family of finite dimensional
subspaces, namely the subspaces which are stable under the action of
the multiplicative group $(\c^*)^n$. Each such subspace is of the
form $L_A$ for some finite set $A\subset \z^n$ of monomials.

For two finite dimensional subspaces $L_1$,$L_2$ of regular
functions in $(\c^*)^n $, let us define the product $L_1L_2$ as the
subspace spanned by the products $fg$, where $f\in L_1$, $g\in L_2$.
Clearly multiplication of monomials corresponds to the addition of
their exponents, i.e. $z^{k_1}z^{k_2}=z^{k_1+k_2}.$ This implies
that $L_{A_1}L_{A_2}=L_{A_1+A_2}$.

The Bernstein-Ku\v{s}nirenko theorem defines and computes the
intersection index $[L_{A_1}, L_{A_2}, \ldots, L_{A_n}]$ of the
$n$-tuples of subspaces  $L_{A_i}$ for finite subsets $A_i \subset
\z^n$. Since this intersection index is equal to the mixed volume,
it enjoys the same properties, namely: 1) Positivity; 2)
Monotonicity; 3) Multi-linearity; and 4) Alexandrov-Fenchel inequality
and its corollaries. Moreover, if for a finite set $A \subset \z^n$ we
let $\overline{A} = \Delta_A \cap
\z^n$, then 5) the spaces $L_A$ and $L_{\overline{A}}$ have the same intersection indices.
That is, for any $(n-1)$-tuple of finite subsets $A_2,
\ldots A_n \in \z^n$,
$$[L_A, L_{A_2}, \ldots, L_{A_n}] = [L_{\overline{A}}, L_{A_2}, \ldots, L_{A_n}].$$
This means that (surprisingly!) enlarging $L_A \mapsto L_{\overline{A}}$
does not change any of the intersection indices we considered. And hence
in counting the number of solutions of a system, instead of
support of a polynomial, its convex hull plays the main role. Let us
denote the subspace $L_{\overline{A}}$ by $\overline{L_A}$ and call
it the {\it completion of $L_A$}.

Since the semi-group of convex bodies with Minkowski sum has cancelation
property, the following cancelation property
for the finite subsets of $\z^n$ holds: if for finite subsets $A, B, C \in
\z^n$ we have $\overline{\overline{A} + \overline{C}} =
\overline{\overline{B} + \overline{C}}$ then $\overline{A} =
\overline{B}$. And we have the same cancelation property for the
corresponding semi-group of subspaces $L_A$. That is, if
$\overline{\overline{L_A} ~\overline{L_C}} =
\overline{\overline{L_B}~\overline{L_C}}$ then
$\overline{L_A} = \overline{L_B}$.


Bernstein-Ku\v{s}nirenko theorem relates mixed volume in convex geometry with
intersection index in algebraic geometry. In algebraic geometry
the following inequality about intersection indices
on a surface is well-known.
\begin{Th}[Hodge inequality]\label{th-Hodge} Let $\Gamma_1, \Gamma_2$ be
algebraic curves on a smooth irreducible projective surface. Assume
that $\Gamma_1, \Gamma_2$ have positive self-intersection indices.
Then
$$ (\Gamma_1, \Gamma_2)^2\geq (\Gamma_1, \Gamma_1)(\Gamma_2,
\Gamma_2)$$ where $(\Gamma_i,\Gamma_j)$ denotes the intersection index of
the curves $\Gamma_i$ and $\Gamma_j$.
\end{Th}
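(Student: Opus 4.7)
The plan is to deduce the Hodge inequality directly from the isoperimetric inequality (Theorem \ref{th-isoper}) via the Newton convex body construction outlined in the introduction. Since $X$ is two-dimensional, the Newton bodies attached to finite dimensional spaces of rational functions on $X$ are planar, precisely the setting in which the $n=2$ case of Brunn-Minkowski has already been established above by elementary means; so the algebraic inequality for curves on $X$ should collapse to a geometric inequality for convex regions in $\r^2$.

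First I would reduce to the case of very ample divisors. Every divisor class on a smooth projective surface is a difference of very ample classes, so by bilinearity of the intersection pairing it suffices to prove the inequality when both $\Gamma_1,\Gamma_2$ are very ample (with positive self-intersection). For each $\Gamma_i$ pick a nonzero section $s_i\in H^0(X,\mathcal{O}(\Gamma_i))$ and let $L_i\subset\c(X)$ denote the image of $H^0(X,\mathcal{O}(\Gamma_i))$ under $f\mapsto f/s_i$, a finite dimensional subspace of $\c(X)$. For generic $f\in L_1$, $g\in L_2$ the common zeros of $f,g$ in a suitable dense open subset of $X$ are in bijection with the points of a transverse intersection of the corresponding divisors, so the intersection index $[L_i,L_j]$ in the sense of Section \ref{subsec-B-K-convex-geo} coincides with $(\Gamma_i,\Gamma_j)$, and in particular $[L_i,L_i]=(\Gamma_i,\Gamma_i)>0$.

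Next I would fix a $\z^2$-valued valuation $\nu$ on $\c(X)$ (for instance the flag valuation at a smooth point $p\in Y\subset X$, where $Y$ is a smooth curve through $p$) and apply the Newton convex body construction promised in the introduction to obtain planar convex bodies $\Delta(L_i)\subset\r^2$. The generalized Ku\v{s}nirenko theorem of this paper then furnishes the key identity
$$[L_i,L_j]=2\,A\bigl(\Delta(L_i),\Delta(L_j)\bigr),$$
where $A$ denotes mixed area in $\r^2$; specializing to $i=j$ gives $[L_i,L_i]=2\,\Area(\Delta(L_i))$. The positivity assumption $(\Gamma_i,\Gamma_i)>0$ translates into $\Area(\Delta(L_i))>0$, so each $\Delta(L_i)$ is a genuine two-dimensional convex body and the isoperimetric inequality applies nontrivially.

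Substituting these identifications into Theorem \ref{th-isoper} yields
$$(\Gamma_1,\Gamma_2)^2 = 4\,A\bigl(\Delta(L_1),\Delta(L_2)\bigr)^2 \geq 4\,\Area\bigl(\Delta(L_1)\bigr)\,\Area\bigl(\Delta(L_2)\bigr) = (\Gamma_1,\Gamma_1)(\Gamma_2,\Gamma_2),$$
which is the desired Hodge inequality. The main obstacle is the identity relating intersection indices of subspaces of $\c(X)$ to mixed areas of the associated Newton bodies; this is the algebro-geometric content of the paper's central construction, and all the serious work is concentrated there. A secondary, more routine obstacle is the reduction to very ample divisors together with the careful handling of base loci needed so that $[L_i,L_j]$ truly agrees with $(\Gamma_i,\Gamma_j)$ on the nose rather than up to base-locus corrections.
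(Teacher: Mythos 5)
There is a mismatch worth flagging first: the paper does not actually prove Theorem~\ref{th-Hodge}. It states it as a well-known classical fact of algebraic geometry (Hodge index theorem) and then uses it as motivation. What the paper \emph{does} prove via the Newton-body machinery is Corollary~\ref{cor-Hodge}, the ``version of Hodge inequality'' $[L,L][G,G]\leq[L,G]^2$ for big subspaces $L,G\in\K$ on a quasi-projective surface, which it deduces from Corollary~\ref{cor-Brunn-Mink-int-index}. Your proposal is trying to prove the classical Theorem~\ref{th-Hodge} by a route parallel to that proof, so it is fair to compare them.

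Two steps in your argument do not hold up. First, the claimed identity $[L_i,L_j]=2\,A(\Delta(L_i),\Delta(L_j))$ for the \emph{mixed} intersection index is false in general, and the paper emphasizes precisely this point. Theorem~\ref{th-main}(3) only yields the diagonal formula $[L,L]=2\,\Vol(\Delta(S(L)))$; the mixed formula would require additivity $\Delta(S(L_1))+\Delta(S(L_2))=\Delta(S(L_1L_2))$, but Theorem~\ref{th-main}(4) gives only the inclusion $\subseteq$, and Section~\ref{sec-Alexandrov-Fenchel} explicitly warns that the equality (hence a direct Bernstein-type statement) fails outside special situations such as the $G$-stable subspaces of Section~12. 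The paper's proof of Corollary~\ref{cor-Hodge} circumvents this: it never evaluates a mixed index via mixed area. Instead it uses multi-linearity of the intersection index (Theorem~\ref{th-multi-lin-int-index}) to write $[LG,LG]=[L,L]+2[L,G]+[G,G]$, applies the diagonal formula to $L$, $G$, $LG$ separately, then uses the inclusion together with classical Brunn--Minkowski to bound $\Vol(\Delta(S(LG)))$ from below, and finally subtracts. If you replace your false mixed-area identity by this multi-linearity trick, your argument becomes the paper's.

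Second, the reduction ``every divisor is a difference of very ample ones, so by bilinearity we may assume $\Gamma_1,\Gamma_2$ very ample'' does not work. The Hodge inequality is quadratic, not linear, in each $\Gamma_i$: if $\Gamma_1=A_1-B_1$, the quantities $(\Gamma_1,\Gamma_1)$, $(\Gamma_1,\Gamma_2)$ mix the pieces with signs, and there is no way to assemble the inequality for $\Gamma_1,\Gamma_2$ from the corresponding inequalities for very ample representatives. A correct reduction would instead argue that an irreducible curve with positive self-intersection is nef and big, approximate by ample $\q$-divisors, and pass to the limit; that is a genuinely different (and more delicate) argument than what you wrote, and it must also ensure the Kodaira maps of the resulting subspaces are birational so that part (3) of Theorem~\ref{th-main} (rather than the general part (2) with the correction factor $d(L)/s(L)$) applies.
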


On one hand Theorem \ref {th-Bern} allows one to prove
Alexandrov-Fenchel inequality algebraically using Theorem \ref{th-Hodge}
(see \cite{Khovanskii-BZ, Teissier}). On the other hand Hodge inequality suggests
an analogy between the mixed volume theory and the intersection theory
of Cartier divisors on a projective algebraic variety.

We will return back to this discussion after statement of our main
theorem (Theorem \ref{th-main}) and its corollary which is a version of
Hodge inequality.

\section{Analog of the Intersection theory of Cartier divisors for
non-complete varieties }
Now we discuss general results inspired by
Bernstein-Ku\v{s}nirenko theorem which can be considered as an
analogue of the intersection theory of Cartier divisors for
non-complete varieties (\cite{Askold-Kiumars-arXiv2}). Instead of
$(\c^*)^n $ we take any
irreducible $n$-dimensional quasi-projective variety $X$ and instead of
a finite dimensional space of functions spanned by monomials we take
any finite dimensional space of rational functions. For these spaces
we define an intersection index and prove that it enjoys
all the properties of the mixed volume of convex bodies.

Consider the collection ${\bf K}_{rat}(X)$ of all finite dimensional
subspaces of rational functions on $X$. The set ${\bf K}_{rat}(X)$
has a natural multiplication: product $L_1L_2$ of two subspaces
$L_1, L_2 \in {\bf K}_{rat}(X)$ is the subspace spanned by all the
products $fg$ where $f \in L_1$, $g \in L_2$. With respect to this
multiplication, ${\bf K}_{rat}(X)$ is a commutative semi-group.
\begin{Def} The {\it intersection index} $[L_1,\dots, L_n]$  of
$L_1,\dots,L_n \in {\bf K}_{rat}(X)$ is the number of solutions in $
X$ of a generic system of equations $f_1=\dots=f_n=0$, where
$f_1\in L_1,\dots, f_n\in L_n$. In counting the solutions, we
neglect solutions $x$ at which all the functions in some space $L_i$
vanish as well as solutions at which at least one function from some
space $L_i$ has a pole.
\end {Def}
More precisely, let $\Sigma \subset X$ be a hypersurface which
contains: 1) all the singular points of $X$; 2) all the poles of
functions from any of the $L_i$; 3) for any $i$, the set of common
zeros of all the $f \in L_i$. Then for a generic choice of $(f_1,
\ldots, f_n) \in L_1 \times \cdots \times L_n$, the intersection
index $[L_1, \ldots, L_n]$ is equal to the number of solutions $\{x
\in X \setminus \Sigma \mid f_1(x) = \ldots = f_n(x) = 0 \}$.

\begin{Th} The intersection index $[L_1, \ldots, L_n]$ is well-defined. That is, there is
a Zariski open subset $U$ in the vector space $L_1 \times \cdots \times L_n$ such that for any
$(f_1, \ldots, f_n) \in U$ the number of solutions $x \in X \setminus \Sigma$
of the system $f_1(x) = \ldots = f_n(x) = 0$ is the same (and hence equal to $[L_1, \ldots, L_n]$).
Moreover the above number of solutions is independent of the choice of $\Sigma$ containing 1)-3) above.
\end{Th}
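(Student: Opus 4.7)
The plan is to realize the solutions as fibers of a projection from a suitable incidence variety and then show this incidence is (essentially) a vector bundle over $X\setminus\Sigma$. To begin, I would define
\[
Z \;=\; \{(f_1,\ldots,f_n,x)\in L_1\times\cdots\times L_n\times(X\setminus\Sigma)\;\mid\; f_i(x)=0,\ i=1,\ldots,n\}.
\]
Since condition (2) makes every $f_i$ regular on $X\setminus\Sigma$, this is a closed subvariety. Let $\pi_1$ and $\pi_2$ denote the two projections from $Z$ to $L_1\times\cdots\times L_n$ and to $X\setminus\Sigma$, respectively.

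The heart of the argument is a fiber dimension count. For every $x\in X\setminus\Sigma$, condition (3) says that $x$ is not a common zero of $L_i$, so the evaluation $\operatorname{ev}_x^i:L_i\to\c$ is surjective and its kernel $H_{i,x}\subset L_i$ is a hyperplane of codimension exactly $1$. The fiber $\pi_2^{-1}(x)=H_{1,x}\times\cdots\times H_{n,x}$ has constant dimension $\sum_i\dim L_i-n$, and these kernels assemble into a subbundle of the trivial bundle $(L_1\times\cdots\times L_n)\times(X\setminus\Sigma)$. Because $X\setminus\Sigma$ is smooth (by (1)), irreducible, and of dimension $n$, the total space $Z$ is smooth, irreducible, and of dimension $\sum_i\dim L_i=\dim(L_1\times\cdots\times L_n)$. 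Consequently $\pi_1$ is a morphism of irreducible varieties of the same dimension, hence either dominant and generically finite of some degree $N\ge 0$, or not dominant (in which case we set $N=0$). In either case, generic flatness yields a Zariski open $U\subset L_1\times\cdots\times L_n$ over which the fiber of $\pi_1$ is reduced of cardinality exactly $N$. This proves the first claim.

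For the independence of $\Sigma$, suppose $\Sigma'$ is a second admissible choice. Replacing $\Sigma'$ by $\Sigma\cup\Sigma'$ (which again satisfies (1)--(3)) reduces to $\Sigma\subseteq\Sigma'$. The incidence variety $Z'$ formed with $\Sigma'$ is then just the restriction of the bundle $Z\to X\setminus\Sigma$ to the open subset $X\setminus\Sigma'$, so
\[
Z\setminus Z'=\pi_2^{-1}(\Sigma'\setminus\Sigma),\qquad \dim(Z\setminus Z')\le(n-1)+\Big(\sum_i\dim L_i-n\Big)=\dim(L_1\times\cdots\times L_n)-1.
\]
Hence $\pi_1(Z\setminus Z')$ is a proper subvariety of $L_1\times\cdots\times L_n$, and over the intersection of its complement with $U$ and $U'$ the two counts coincide.

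The main obstacle, small but essential, is justifying the vector-bundle structure of $\pi_2:Z\to X\setminus\Sigma$; this is precisely where each of the three conditions on $\Sigma$ is used. Condition (1) provides smoothness, (2) is needed for the defining equations to make sense at all, and (3) guarantees surjectivity of every evaluation map so that each fiber has the expected codimension $n$ rather than something larger. Once $\dim Z$ is pinned down, the remaining steps --- well-definedness of $N$, constancy on a Zariski open $U$, and independence of $\Sigma$ --- are routine consequences of generic flatness and comparison of fiber dimensions over proper subvarieties.
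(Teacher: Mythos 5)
The paper itself omits the proof of this theorem (it refers the reader to the preprints \cite{Askold-Kiumars-arXiv} and \cite{Askold-Kiumars-arXiv2}), so there is no in-text argument to compare against directly; I can only assess your argument on its own merits.

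Your approach --- an incidence correspondence $Z\subset L_1\times\cdots\times L_n\times(X\setminus\Sigma)$, identification of $\pi_2:Z\to X\setminus\Sigma$ as a vector bundle using conditions (1)--(3), a dimension count giving $\dim Z=\dim(L_1\times\cdots\times L_n)$, and then passage to a dense open on the target --- is the standard way to establish this kind of Bertini-type well-definedness, and it is correct in outline. Two points deserve sharpening, however. First, ``generic flatness yields a Zariski open $U$ over which the fiber of $\pi_1$ is reduced of cardinality exactly $N$'' overstates what generic flatness gives: flatness of a generically finite morphism only gives fibers of constant \emph{length}, not $N$ distinct reduced points. To get honest set-theoretic cardinality $N$ you need generic \emph{smoothness} (i.e.\ $\pi_1$ generically étale), which is where characteristic $0$ enters; $Z$ being smooth and irreducible, this is available, but it should be invoked explicitly rather than attributed to flatness. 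Second, $\pi_1$ is merely quasi-finite over the open locus you find, not a priori proper, so constancy of the fiber count requires one more standard step: a dominant quasi-finite morphism of irreducible varieties of equal dimension is actually \emph{finite} over a dense open of the target (Zariski's main theorem, or equivalently removing the image of the ``boundary'' in a relative compactification). Without that shrinking, points of the fiber can escape as the parameter varies, and the count need not be locally constant. Both are routine lemmas and once cited the argument is complete; the reduction of the $\Sigma$-independence to $\Sigma\subseteq\Sigma'$ via $\Sigma\cup\Sigma'$ and the dimension bound on $\pi_1(Z\setminus Z')$ are correct as written.
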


The following properties of the intersection index are easy consequences of the definition:
\begin{Prop} \label{prop-obvious-prop-int-index}
1) $[L_1,\dots,L_n]$ is a symmetric function of
the n-tuples $L_1,\dots,L_n \in {\bf K}_{rat}(X)$, (i.e. takes the same value under a
permutation of $L_1,\dots,L_n$); 2) The intersection index
is monotone, (i.e. if $L'_1\subseteq L_1,\dots, L'_n\subseteq L_n$,
then $[L_1,\dots,L_n] \geq [L'_1,\dots,L'_n]$; and 3) The intersection index is non-negative
(i.e. $[L_1,\dots,L_n] \geq 0$).
\end{Prop}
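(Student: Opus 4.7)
The plan is to verify the three claims in sequence, with (1) and (3) essentially immediate and the real content lying in (2), which I would prove by a short implicit function theorem argument.

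For (1), the locus $\{x \in X \setminus \Sigma \mid f_1(x) = \cdots = f_n(x) = 0\}$ is manifestly invariant under permutation of the equations, so its cardinality $[L_1, \ldots, L_n]$ is a symmetric function of $L_1, \ldots, L_n$. For (3), the preceding well-definedness theorem states that $[L_1, \ldots, L_n]$ equals the cardinality of a finite set for every $(f_1, \ldots, f_n)$ in a non-empty Zariski-open subset of $L_1 \times \cdots \times L_n$, and hence is a non-negative integer.

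For (2), by symmetry and induction on the number of slots at which $L_i' \ne L_i$, it suffices to show $[L_1, L_2, \ldots, L_n] \geq [L_1', L_2, \ldots, L_n]$ when $L_1' \subseteq L_1$. Write $N$ and $N'$ for these two intersection indices. I would first choose a system $(f_1', f_2, \ldots, f_n) \in L_1' \times L_2 \times \cdots \times L_n$ generic enough to realize $N'$ with simple (transverse) solutions $x_1, \ldots, x_{N'} \in X \setminus \Sigma$, and then an element $f_1 \in L_1$ such that for all but finitely many $t \in \c$ the tuple $(f_1(t), f_2, \ldots, f_n)$ with $f_1(t) := f_1' + t(f_1 - f_1')$ lies in the good locus for $L_1 \times L_2 \times \cdots \times L_n$ and hence has exactly $N$ transverse solutions in $X \setminus \Sigma$. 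The holomorphic implicit function theorem, applied at $t = 0$ to the transverse solutions of $(f_1', f_2, \ldots, f_n)$, produces analytic arcs $x_i(t)$ solving the deformed system for $t$ near $0$. Since the $x_i$ are distinct and $X \setminus \Sigma$ is open, these arcs remain distinct and inside $X \setminus \Sigma$ for all $t$ in a sufficiently small punctured neighborhood of $0$. Picking such a $t$ that also lies in the good locus of the line, the system at $t$ has exactly $N$ transverse solutions in $X \setminus \Sigma$ and contains at least the $N'$ distinct solutions $x_i(t)$, whence $N \geq N'$.

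The main obstacle is arranging the simultaneous genericity in the last paragraph, namely choosing $(f_2, \ldots, f_n)$ and $f_1$ so that both $(f_1', f_2, \ldots, f_n)$ is generic in $L_1' \times L_2 \times \cdots \times L_n$ and the line $t \mapsto (f_1(t), f_2, \ldots, f_n)$ is generically in the good locus of $L_1 \times L_2 \times \cdots \times L_n$. Both conditions are Zariski open and, by the well-definedness theorem, non-empty; a standard Bertini-type observation (a generic affine line in a vector space meets any non-empty Zariski open subset in a cofinite set) guarantees that they can be imposed simultaneously, completing the argument.
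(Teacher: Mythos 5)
The paper offers no proof of this Proposition; it simply declares the three properties ``easy consequences of the definition.'' Your argument is sound in outline and more detailed than anything in the paper, and the first and third parts are exactly as immediate as you claim. The interesting content is your implicit-function-theorem proof of monotonicity, which is a correct and natural deformation argument. Two caveats are worth flagging. First, your argument requires that a generic system in $L_1'\times L_2\times\cdots\times L_n$ has \emph{simple} (transverse) solutions in $X\setminus\Sigma$; this is true by a Bertini-type argument (base points of each $L_i$ lie in $\Sigma$, and $X\setminus\Sigma$ is smooth), but it is strictly more information than the paper's well-definedness theorem, as quoted, asserts — it only promises a constant count, not transversality. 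Your proof silently imports this, and it should be stated and justified. Second, the ``generic affine line meets any non-empty Zariski open in a cofinite set'' principle is applied to a \emph{constrained} line: it must pass through the chosen point $(f_1',f_2,\dots,f_n)$ and lie inside the affine slice $\{f_1'\}+L_1\times\{(f_2,\dots,f_n)\}$. For such a line to meet the good locus $U\subset L_1\times\cdots\times L_n$, one first needs the slice $L_1\times\{(f_2,\dots,f_n)\}$ itself to meet $U$, which holds only for $(f_2,\dots,f_n)$ outside a proper closed subset; this extra genericity requirement on $(f_2,\dots,f_n)$ is compatible with the one coming from realizing $N'$, but the compatibility needs to be said explicitly rather than subsumed into the one-line Bertini remark. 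With those two points spelled out, the argument is complete.
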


The next two theorems contain the main properties of the intersection index.
\begin{Th}[Multi-linearity] \label{th-multi-lin-int-index}
1) Let $L_1', L_1'', L_2, \ldots, L_n \in {\bf K}_{rat}(X)$
and put $L_1= L_1'L_1''$. Then
$$[L_1,\dots,L_n]=[L'_1,\dots,L_n]+[L''_1,\dots,L_n].$$
2) Let $L_1, \ldots, L_n \in {\bf K}_{rat}(X)$ and take $1$-dimensional subspaces
$L'_1, \ldots, L'_n \in {\bf K}_{rat}(X)$. Then
$$[ L_1, \ldots, L_n ] = [ L'_1L_1, \ldots, L'_nL_n].$$
\end{Th}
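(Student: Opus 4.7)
The plan is to handle (2) by enlarging the excluded hypersurface $\Sigma$, and to deduce (1) from multilinearity of the intersection product of Cartier divisors on a smooth projective model.

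For part (2), write each one-dimensional subspace as $L'_i=\c\cdot\phi_i$ for nonzero rational functions $\phi_i$. By the preceding well-definedness theorem, the intersection index does not depend on the choice of $\Sigma$, so I enlarge $\Sigma$ to contain the zero and pole divisors of every $\phi_i$. On $X\setminus\Sigma$ each $\phi_i$ is a nowhere-vanishing regular function, so $(\phi_i f_i)(x)=0$ if and only if $f_i(x)=0$. Hence the two systems $\{\phi_i f_i=0\}_{i=1}^n$ and $\{f_i=0\}_{i=1}^n$ have identical solution sets in $X\setminus\Sigma$ for every $(f_1,\ldots,f_n)$, and genericity is preserved under the bijection $(f_1,\ldots,f_n)\leftrightarrow(\phi_1 f_1,\ldots,\phi_n f_n)$ between $L_1\times\cdots\times L_n$ and $L'_1L_1\times\cdots\times L'_nL_n$, so the two intersection indices coincide.

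For part (1), the geometric idea is to specialize $f_1$ to a product: for generic $g\in L'_1$ and $h\in L''_1$, the element $gh$ lies in $L_1=L'_1L''_1$, and the zero locus of $gh$ on $X\setminus\Sigma$ (with $\Sigma$ also enlarged to contain the poles and base loci of $L'_1$ and $L''_1$) is the disjoint union of the zero loci of $g$ and of $h$. Consequently, for generic $f_2,\ldots,f_n$, the solutions of $gh=f_2=\cdots=f_n=0$ partition into those of $g=f_2=\cdots=f_n=0$ and of $h=f_2=\cdots=f_n=0$, contributing $[L'_1,L_2,\ldots,L_n]+[L''_1,L_2,\ldots,L_n]$ in total.

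The remaining step, and the main technical obstacle, is to justify that this count, obtained by specializing $f_1$ to the codimension-$\geq 1$ locus of ``rank-one'' tensors $gh\in L_1$, still equals the generic count $[L_1,L_2,\ldots,L_n]$. I would do this by translating the intersection index into classical intersection theory on a smooth projective model: each $L_i\in{\bf K}_{rat}(X)$ defines a Kodaira rational map $\Phi_{L_i}\colon X\dashrightarrow\p(L_i^*)$, and I choose a smooth projective $\tilde X$ equipped with a proper birational morphism to a projective closure of $X$ that simultaneously resolves the indeterminacy of all of $\Phi_{L'_1},\Phi_{L''_1},\Phi_{L_2},\ldots,\Phi_{L_n}$, yielding Cartier classes $D'_1,D''_1,D_2,\ldots,D_n$. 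A Bertini/transversality argument, together with a further enlargement of $\Sigma$ to contain the boundary $\tilde X\setminus X$, the exceptional locus of the resolution, the base loci of the linear systems, and the singular locus of $X$, identifies $[L_1,\ldots,L_n]$ with the intersection number $D_1\cdot D_2\cdots D_n$ on $\tilde X$. Since $\textup{div}(gh)=\textup{div}(g)+\textup{div}(h)$, the Cartier class attached to $L'_1L''_1$ is $D'_1+D''_1$, and multilinearity of the Cartier intersection product gives
$$(D'_1+D''_1)\cdot D_2\cdots D_n=D'_1\cdot D_2\cdots D_n+D''_1\cdot D_2\cdots D_n,$$
which is precisely the claimed additivity. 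The bookkeeping at the boundary and exceptional loci is where the work lies: one must verify via genericity that no solutions of a generic system on $X\setminus\Sigma$ either escape into or arrive from these extra loci, which is exactly what a Bertini-type argument for base-point-free linear systems provides.
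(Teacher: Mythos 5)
The paper states this theorem without proof, deferring the argument to the companion preprint \cite{Askold-Kiumars-arXiv2}, so the comparison is with the spirit of the cited construction. Your treatment of part~(2) is correct and standard: after enlarging $\Sigma$ to absorb the zeros and poles of the generators $\phi_i$ of the one-dimensional spaces, multiplication by $\phi_i$ becomes a linear isomorphism $L_i\to L'_iL_i$ preserving solution sets in $X\setminus\Sigma$, so the two intersection indices agree.

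For part~(1) your eventual route---resolve the indeterminacy of all the Kodaira maps on a smooth projective $\tilde X$ dominating a compactification of $X$, identify $[L_1,\ldots,L_n]$ with the Cartier intersection number $D_1\cdots D_n$, and use $\Phi^*_{L'_1L''_1}\mathcal O(1)=D'_1+D''_1$ (via the Segre factorization) together with multilinearity of the Cartier product---is sound and is precisely in the spirit of viewing this index as an analogue of Cartier intersection theory, as the section title suggests. Two remarks. First, the opening heuristic about specializing $f_1$ to a product $gh$ is rightly abandoned: such products form a thin locus in $L_1=L'_1L''_1$, and the count at a special member can drop relative to the generic one (solutions may escape into $\Sigma$ or to the boundary); also your claim that the zero locus of $gh$ is the \emph{disjoint} union of those of $g$ and $h$ is too strong globally---disjointness only appears after cutting with the generic curve $\{f_2=\cdots=f_n=0\}$, and in general one should say union counted with multiplicity. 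Second, and more substantively, the weight of the argument sits entirely on the lemma $[L_1,\ldots,L_n]=D_1\cdots D_n$: one must verify that a generic complete intersection of members of the (now base-point-free) pulled-back systems on $\tilde X$ is finite, reduced, and avoids $\tilde X\setminus X$, the exceptional locus, $\textup{Sing}(X)$ and the preimage of $\Sigma$. This does follow by repeated Bertini cutting against these lower-dimensional loci, but the nested genericity requirements have to be assembled into a single Zariski-open condition on $L_1\times\cdots\times L_n$ (an incidence-variety dimension count), which you gesture at without carrying out. With that detail supplied, the proposal is a correct proof.
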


Let us say that $f \in \c(X)$ is {\it integral over a subspace} $L \in {\bf K}_{rat}(X)$ if
$f$ satisfies an equation
$$f^m+a_1f^{m-1}+\dots +a_m=0,$$
where  $m>0$ and $a_i \in L^i$, for each $i=1,\ldots, m$. It is
well-known that the collection $\overline{L}$ of all integral
elements over $L$ is a vector subspace containing $L$. Moreover if
$L$ is finite dimensional then $\overline{L}$ is also finite
dimensional (see \cite[Appendix 4]{Zariski}). It is called the {\it
completion of $L$}. For two subspaces $L, M \in \K$ we say that $L$
is equivalent to $M$ (written $L \sim M$) if there is $N \in \K$
with $LN = MN$. One shows that the completion $\overline{L}$ is in
fact the largest subspace in $\K$ which is equivalent to $L$. The
enlarging $L\rightarrow \bar L$ is analogous to the geometric
operation $A \mapsto \Delta (A)$ which associates to a finite set
$A$ its convex hull $\Delta(A)$.

\begin{Th} \label{th-int-index-completion}
1) Let $L_1 \in K_{rat}(X)$ and let
$G_1\in K_{rat}(X)$ be the subspace spanned by $L_1$ and
a rational function $g$ integral over $L_1$. Then for any
$(n-1)$-tuple $L_2,\dots,L_n\in K_{rat}(X)$ we have
$$[L_1,L_2,\dots,L_n]=[G_1,L_2,\dots,L_n].$$
2) Let $L_1 \in {\bf K}_{rat}(X)$ and let $\overline{L_1}$ be its completion as defined above.
Then for any $(n-1)$-tuple $L_2,\dots,L_n\in {\bf K}_{rat}(X)$ we have
$$[L_1,L_2,\dots,L_n]=[\overline{L_1},L_2,\dots,L_n].$$
\end{Th}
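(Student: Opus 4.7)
The plan is to deduce part 2) from part 1) by a short induction, and to prove part 1) by combining the integral equation satisfied by $g$ with the multi-linearity of the intersection index (Theorem \ref{th-multi-lin-int-index}).

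For part 1), write $G_1 = L_1 + \c \cdot g$, and let
$$ g^m + a_1 g^{m-1} + \cdots + a_m = 0, \quad a_i \in L_1^i, $$
be the integral equation satisfied by $g$. The heart of the argument is the algebraic identity
$$ G_1^m \;=\; G_1^{m-1} L_1 $$
of subspaces of $\c(X)$. One inclusion is immediate since $L_1 \subseteq G_1$. For the reverse, expand $G_1^m$ as the sum $\sum_{k=0}^{m} L_1^{k} g^{m-k}$; all terms with $k \geq 1$ visibly lie in $G_1^{m-1} L_1$, while the remaining term $\c \cdot g^m$ lies in $L_1 g^{m-1} + L_1^2 g^{m-2} + \cdots + L_1^m \subseteq G_1^{m-1} L_1$ by the integral equation.

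With this identity in hand, I compute $[G_1^m, L_2, \ldots, L_n]$ in two ways using part 1) of Theorem \ref{th-multi-lin-int-index}. Iterating multi-linearity gives
$$ [G_1^m, L_2, \ldots, L_n] \;=\; m \cdot [G_1, L_2, \ldots, L_n], $$
while
$$ [G_1^{m-1} L_1, L_2, \ldots, L_n] \;=\; (m-1) \cdot [G_1, L_2, \ldots, L_n] + [L_1, L_2, \ldots, L_n]. $$
Equating these and cancelling $(m-1)\,[G_1, L_2, \ldots, L_n]$ yields the desired equality $[G_1, L_2, \ldots, L_n] = [L_1, L_2, \ldots, L_n]$.

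For part 2), since $\overline{L_1}$ is finite-dimensional, choose elements $g_1, \ldots, g_r \in \overline{L_1}$ so that $\overline{L_1} = L_1 + \c g_1 + \cdots + \c g_r$. Setting $L_1^{(0)} = L_1$ and $L_1^{(i)} = L_1^{(i-1)} + \c g_i$, each $g_i$ is integral over $L_1$ and hence a fortiori over the larger subspace $L_1^{(i-1)}$ (the same integral equation works, since its coefficients lie in $L_1^j \subseteq (L_1^{(i-1)})^j$). Repeated application of part 1) then gives
$$ [L_1, L_2, \ldots, L_n] = [L_1^{(1)}, L_2, \ldots, L_n] = \cdots = [\overline{L_1}, L_2, \ldots, L_n]. $$

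The principal obstacle is pinning down the identity $G_1^m = G_1^{m-1} L_1$: the whole argument hinges on recognising that the integral relation precisely eliminates the ``top-degree in $g$'' term from $G_1^m$, after which the multi-linearity argument collapses. Minor care is needed in iterating multi-linearity (which is phrased for a single factorization) and in verifying that being integral over $L_1$ is preserved under enlarging the ground subspace, but both of these are routine consequences of the definitions.
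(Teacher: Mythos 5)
Your proof is correct, and it uses what is essentially the standard argument in the Kaveh--Khovanskii framework (the present note omits proofs, deferring to the preprints, but the approach there is the same one): the integral relation yields the subspace identity $G_1^m = G_1^{m-1}L_1$, which combined with the additivity of the intersection index under products (Theorem \ref{th-multi-lin-int-index}) forces $[G_1,L_2,\dots,L_n]=[L_1,L_2,\dots,L_n]$, and part 2) follows by adjoining one generator of $\overline{L_1}/L_1$ at a time. One small remark: you do not actually need to iterate multi-linearity down to the scalar factors $m$ and $m-1$; a single application to each of the two factorizations $G_1^m = G_1\cdot G_1^{m-1}$ and $G_1^m = L_1\cdot G_1^{m-1}$ already gives $[G_1,L_2,\dots,L_n]+[G_1^{m-1},L_2,\dots,L_n]=[L_1,L_2,\dots,L_n]+[G_1^{m-1},L_2,\dots,L_n]$, from which the claim follows by canceling the common term.
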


As with any other commutative semi-group, there corresponds a
Grothendieck group to the semi-group ${\bf K}_{rat}(X)$. For a
commutative semi-group $K$, the Grothendieck group $G(K)$ is the
unique abelian group defined with the following universal property:
there is a homomorphism $\phi: K \to G(K)$ and for any abelian group
$G'$ and homomorphism $\phi':K \to G'$, there exist a homomorphism
$\psi: G(K) \to G'$ such that $\phi' = \psi \circ \phi$. The
Grothendieck group can be defined constructively also: for $x, y \in
K$ we say $x \sim y$ if there is $z \in K$ with $xz=yz$. Then $G(K)$
is the group of formal quotients of equivalence classes of $\sim$.
From the multi-linearity of the intersection index it follows that
the intersection index extends to the the Grothendieck group of ${\bf
K}_{rat}(X)$.

The Grothendieck group of ${\bf K}_{rat}(X)$ can be considered as
an analogue (for a non-complete variety $X$) of the group of Cartier
divisors on a projective variety, and the intersection index on
the Grothendieck group of ${\bf K}_{rat}(X)$ as an analogue of the intersection
index of Cartier divisors.

The intersection theory on the Grothendieck group of ${\bf K}_{rat}(X)$
enjoys all the properties of mixed volume. Some of such
properties we already discussed in the present section. The others
will be discussed later (see Theorem \ref {th-Alex-Fenchel-alg} and
Corollary \ref {cor-Alex-Fenchel-alg} below).

\section{Proof of Bernstein-Ku\v{s}nirenko theorem via Hilbert
theorem}\label {sec-proof-B-K} Let us recall the proof of the
Bernstein-Ku\v{s}hnirenko theorem from \cite{Khovanskii-finite-sets}
which will be important for our generalization.

For each space $L \in \K$ let us define the Hilbert function $H_L$
by $H_L(k) = \dim(L^k)$. For sufficiently large values of $k$, the function $H_L(k)$ is a
polynomial in $k$, called the {\it Hilbert polynomial} of $L$.

With each space $L \in \K$, one associates a rational {\it Kodaira
map} from $X$ to $\p(L^*)$, the projectivization of the dual space
$L^*$: to any $x \in X$ there corresponds a functional in $L^*$
which evaluates $f \in L$ at $x$. The Kodaira map sends $x$ to the
image of this functional in $\p(L^*)$. It is a rational map, i.e.
defined on  a Zariski open subset in $X$. We denote by $Y_L$ the
subvariety in the projective space $\p(L^*)$ which is equal to the
closure of the image of $X$ under the Kodaira map.

The following theorem is a version of the classical Hilbert theorem
on the degree of a subvariety of the projective space.
\begin{Th}[Hilbert's theorem]\label{th-Hil}
The degree  of the Hilbert
polynomial of the space $L$  is equal to the dimension of the
variety $Y_L$, and its leading coefficient $c$ is the degree of $Y_L
\subset \p(L^*)$ divided by $m!$.
\end{Th}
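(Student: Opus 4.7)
The plan is to reduce the statement to the classical Hilbert theorem for projective subvarieties, by identifying $L^k$ with the $k$th graded piece of the homogeneous coordinate ring of $Y_L \subset \p(L^*)$.

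To set up this identification, I would fix a basis $f_1,\ldots,f_N$ of $L$ and the dual basis $y_1,\ldots,y_N$ of $L^*$, so that $\p(L^*)=\p^{N-1}$ with homogeneous coordinates $y_i$, and the Kodaira map reads $\Phi_L(x)=[f_1(x):\cdots:f_N(x)]$. Degree $k$ forms on $\p(L^*)$ are naturally identified with $\Sym^k L \cong \c[y_1,\ldots,y_N]_k$. There is a natural surjection
$$\mu_k:\Sym^k L \twoheadrightarrow L^k,\qquad \sum_{|\alpha|=k} c_\alpha y^\alpha \;\mapsto\; \sum_{|\alpha|=k} c_\alpha f^\alpha,$$
sending a symmetric tensor to the corresponding product of rational functions on $X$.

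The main step is to identify $\ker \mu_k$ with $I(Y_L)_k$, the degree $k$ part of the homogeneous ideal of $Y_L$. A form $p=\sum c_\alpha y^\alpha$ lies in $\ker \mu_k$ iff $p(f_1(x),\ldots,f_N(x))=0$ for generic $x\in X$, iff $p$ vanishes on the image $\Phi_L(X)$, iff $p$ vanishes on its Zariski closure $Y_L$ (which is $Y_L$ by definition of the latter), iff $p\in I(Y_L)_k$. This yields the key equality
$$H_L(k) \;=\; \dim L^k \;=\; \dim \Sym^k L - \dim I(Y_L)_k \;=\; H_{Y_L}(k),$$
where $H_{Y_L}$ is the classical Hilbert function of the projective variety $Y_L\subset \p^{N-1}$ (irreducible, as the closure of the image of the irreducible variety $X$).

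The conclusion is then immediate from the classical Hilbert theorem: for all sufficiently large $k$, $H_{Y_L}(k)$ agrees with a polynomial of degree $m:=\dim Y_L$ whose leading coefficient is $\deg(Y_L)/m!$. The only delicate step is the identification $\ker \mu_k = I(Y_L)_k$, which is the main obstacle; beyond that, the argument is formal. Notably, the degree of the Kodaira map $X\dashrightarrow Y_L$ does not enter the calculation (whether $\Phi_L$ is birational onto its image or merely generically finite), since $\dim L^k$ is computed purely in terms of algebraic relations among the $f_i$, i.e.\ in terms of the subvariety $Y_L$ and not the map to it.
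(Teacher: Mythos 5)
Your proof is correct and is exactly the reduction the paper intends: the paper calls the statement ``a version of the classical Hilbert theorem'' and omits the argument, and your identification of $L^k$ with the degree-$k$ graded piece $\bigl(\Sym L / I(Y_L)\bigr)_k$ of the homogeneous coordinate ring of $Y_L$ (via $\ker\mu_k = I(Y_L)_k$, which holds because a form vanishes on the image of the Kodaira map iff it vanishes on its closure $Y_L$) is precisely that reduction. Your closing remark that the mapping degree of $\Phi_L$ is irrelevant here is also correct and worth keeping, since it is where the factor $d(L)$ in Theorem \ref{th-main}(2) later enters for a different reason.
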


Let $A$ be a finite subset in $\z^n$ with $\Delta(A)$ its convex hull.
Denote by $k*A$ the sum $A+\dots+A$ of $k$ copies of the set $A$,
and by $(k\Delta (A))_C$ the subset  in
$k\Delta (A)$ containing points whose distance to the boundary
$\partial(k\Delta (A))$ is bigger than $C$. The following combinatorial theorem
gives an estimate for the set $k * A$ in terms of the set of integral points in $k\Delta(A)$.

\begin{Th}[\cite{Khovanskii-finite-sets}] \label{th-A}
1) One has  $k*A\subset k\Delta (A)\cap \z^n$.
2) Assume that the differences $a-b$ for $a, b \in A$ generate the
group $\z^n$. Then there exists a constant $C$ such that for any $k \in \n$ we have
$$ (k\Delta (A))_C\cap \z^n\subset k*A.$$
\end{Th}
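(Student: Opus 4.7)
Part (1) is immediate: any element of $k*A$ is a sum $a_1+\cdots+a_k$ of lattice points in $A\subset\z^n$, so it is itself a lattice point; dividing by $k$ writes it as a convex combination of elements of $A$, so it lies in $k\Delta(A)$.

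For Part (2), the plan is to reduce to a single-simplex lemma and patch via a triangulation of $\Delta(A)$ whose vertices lie in $A$. Translating so that $0\in A$ (which alters neither side), the hypothesis becomes: $A$ generates $\z^n$ as an abelian group. Since $A$ affinely spans $\r^n$, fix a triangulation $\Delta(A)=\bigcup_l \tau_l$ with vertices in $A$. For each simplex $\tau=\text{conv}(v_0,\ldots,v_n)$, let $\Lambda_\tau\subset\z^n$ be the sublattice generated by $v_1-v_0,\ldots,v_n-v_0$; it has finite index in $\z^n$.

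\emph{Simplex lemma.} If $y\in k\tau$ and $y\equiv kv_0\pmod{\Lambda_\tau}$, then writing $y-kv_0=\sum_{j\geq 1}\alpha_j(v_j-v_0)$ with $\alpha_j\in\z$, the containment $y\in k\tau$ forces $\alpha_j\geq 0$ and $\sum\alpha_j\leq k$, so $y=(k-\sum\alpha_j)v_0+\sum\alpha_j v_j$ lies in $k*\{v_0,\ldots,v_n\}\subset k*A$. \emph{Coset adjustment.} For each coset $\rho\in\z^n/\Lambda_\tau$, since the images $\{\bar a:a\in A\}$ generate the finite group $\z^n/\Lambda_\tau$, we may write $\rho=\sum_a m_a\bar a$ with integers $m_a\geq 0$; setting $s_\rho:=\sum_a m_a\,a\in N*A$ (common $N$ obtained by padding with copies of $0\in A$) gives $s_\rho\equiv \rho\pmod{\Lambda_\tau}$ with $\|s_\rho-Nv_0\|$ bounded by a constant $D$ depending only on $A$ and the triangulation.

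\emph{Assembly.} Given $x\in(k\Delta(A))_C\cap\z^n$, pick $\tau$ with $x/k\in\tau$ and a vertex $v^*\in\{v_j\}$ whose barycentric coordinate of $x/k$ is maximal, so $\mu_{v^*}\geq 1/(n+1)$. Using $v^*$ in place of $v_0$ in the coset construction, let $\rho$ be the class of $x-kv^*$ modulo $\Lambda_\tau$, set $s=s_\rho\in N*A$, and put $y=x-s$. Then $y\equiv(k-N)v^*\pmod{\Lambda_\tau}$; once $k\geq N(n+1)$, the bound $\mu_{v^*}\geq N/k$ forces $x-Nv^*\in(k-N)\tau$, and the bounded displacement $s-Nv^*$ is absorbed provided $C$ exceeds a constant depending only on $A$ and the triangulation. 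The simplex lemma (with $k-N$ in place of $k$) then yields $y\in(k-N)*\{v_0,\ldots,v_n\}$, so $x=s+y\in N*A+(k-N)*A=k*A$.

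\emph{Main obstacle.} The delicate step is choosing $C$ so that the single hypothesis ``distance $\geq C$ from $\partial(k\Delta(A))$'' simultaneously (i) keeps $x$ in the interior of some $k\tau_l$ with enough slack to absorb the bounded displacement $s-Nv^*$, and (ii) controls $x$'s potential proximity to \emph{interior} facets of the triangulation. The apex trick $v^*=\arg\max_j\mu_j$ keeps $x$ uniformly away from the facet of $\tau$ opposite $v^*$, but proximity to the other facets must be handled by a judicious choice of $\tau$ itself (for instance, choosing among all $\tau_l$ containing $x/k$ the one in which $x/k$ sits deepest). The final $C$ is obtained by maximizing the resulting geometric constants over all simplices and all apex choices; producing this uniform estimate is the technical heart of the proof.
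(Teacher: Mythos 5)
Part (1) is correct. For Part (2), the skeleton (translate so $0\in A$, triangulate $\Delta(A)$ with vertices in $A$, simplex lemma, coset adjustment, assembly) is a reasonable plan, and the simplex lemma itself is proved correctly. There is a small bookkeeping slip: if $\rho$ is the class of $x-kv^*$ and $s=s_\rho\equiv\rho$, then $y=x-s\equiv kv^*\pmod{\Lambda_\tau}$, not $(k-N)v^*$; you must instead take $\rho$ to be the class of $x-(k-N)v^*$. That is fixable. What is not fixable within your framework is the ``main obstacle,'' which is a genuine gap, not a matter of optimizing constants. If $x/k$ lies on (or within distance $\varepsilon/k$ of) a codimension-one facet $F$ of the triangulation interior to $\Delta(A)$, then $x$ may be arbitrarily far from $\partial(k\Delta(A))$ while in \emph{every} simplex $\tau_l$ containing $x/k$ the barycentric coordinate opposite $F$ is $\le\varepsilon/k$. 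The apex $v^*$ is not the vertex opposite $F$, so the bounded displacement $s-Nv^*$ can push $y$ across $(k-N)F$ out of $(k-N)\tau$, and choosing the simplex where $x/k$ ``sits deepest'' does not help: on $F$ itself the point is equally shallow in all adjacent simplices. No constant $C$ depending only on $A$ and the triangulation prevents this, so the assembly as written does not close.

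The resolution in the cited source is a genuinely different idea: do not choose a simplex or a correction depending on $x$, but precompute a single global \emph{conductor}. Pass to the cone $\mathrm{Con}\subset\r^{n+1}$ generated by $\{1\}\times A$ and the semigroup $S$ generated by $\{(1,a):a\in A\}$; the hypothesis gives $\mathrm{group}(S)=\z^{n+1}$. Triangulate $\mathrm{Con}$ into simplicial cones on subsets of the generators; for each simplicial cone $C_l$ collect the finitely many lattice points $p$ in its half-open fundamental parallelepiped, write each $p=s_1^{(p)}-s_2^{(p)}$ with $s_i^{(p)}\in S\cup\{0\}$, and set $c:=\sum_{l,p}s_2^{(p)}\in S$. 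Then $c+(\mathrm{Con}\cap\z^{n+1})\subset S$, since any $z\in \mathrm{Con}\cap\z^{n+1}$ equals $p$ plus a non-negative integer combination of the generators of some $C_l$, and $c+p=(c-s_2^{(p)})+s_1^{(p)}\in S$. Writing $c=(N,c')$, if $x\in(k\Delta(A))_C\cap\z^n$ with $C$ large enough (depending only on $c$) then $(k,x)-c$ stays in $\mathrm{Con}$, hence $(k,x)\in c+(\mathrm{Con}\cap\z^{n+1})\subset S$, i.e.\ $x\in k*A$. Because $c$ is chosen once and for all rather than per $x$, the interior-facet pathology never arises.
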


\begin{Cor} \label{cor-vol-Delta(A)}
Let $A \subset \z^n$ be a finite subset satisfying the condition in Theorem \ref {th-A}(2).
Then $$\lim_{k \to \infty} \frac{\#(k*A)}{k^n} = \Vol_n(\Delta(A)).$$
\end{Cor}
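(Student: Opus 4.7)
The plan is a squeeze argument based directly on the two containments provided by Theorem \ref{th-A}.

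First I would record the two inequalities that come out of Theorem \ref{th-A}. Part (1) gives $k*A \subset k\Delta(A) \cap \z^n$, so
$$\#(k*A) \leq \#\bigl(k\Delta(A) \cap \z^n\bigr),$$
and part (2), using the hypothesis that the differences $a-b$ generate $\z^n$, gives a constant $C$ (independent of $k$) such that
$$\#\bigl((k\Delta(A))_C \cap \z^n\bigr) \leq \#(k*A).$$
Dividing through by $k^n$, it suffices to show that both extreme quantities converge to $\Vol_n(\Delta(A))$ as $k \to \infty$.

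Next I would invoke the standard lattice-point asymptotic for a convex body: for any convex body $\Delta \subset \r^n$ with nonempty interior,
$$\lim_{k \to \infty} \frac{\#(k\Delta \cap \z^n)}{k^n} = \Vol_n(\Delta).$$
This is proved by viewing $\#(k\Delta \cap \z^n)/k^n$ as a Riemann-type sum for the indicator function of $\Delta$ using the refined lattice $\tfrac{1}{k}\z^n$, and noting that the boundary of $\Delta$ has $n$-dimensional Lebesgue measure zero (it is a convex hypersurface). Applied to $\Delta = \Delta(A)$, this handles the upper bound. (If $\Delta(A)$ has empty interior in $\r^n$, then the hypothesis in part (2) of Theorem \ref{th-A} forces $\Delta(A)$ to span $\r^n$ affinely with integer translations, so full-dimensionality is automatic once one translates $A$ to contain the origin.)

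For the lower bound I would compare $(k\Delta(A))_C \cap \z^n$ with $k\Delta(A) \cap \z^n$. The difference is contained in the set of lattice points lying within Euclidean distance $C$ of $\partial(k\Delta(A))$. Since $\partial(k\Delta(A)) = k \cdot \partial\Delta(A)$ has $(n-1)$-dimensional surface area growing like $k^{n-1}$, a standard tube estimate shows
$$\#\bigl(k\Delta(A) \cap \z^n\bigr) - \#\bigl((k\Delta(A))_C \cap \z^n\bigr) = O(k^{n-1}),$$
which is $o(k^n)$. Hence $\#((k\Delta(A))_C \cap \z^n)/k^n$ has the same limit $\Vol_n(\Delta(A))$, and the squeeze concludes the proof.

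The only slightly delicate point is the tube estimate; apart from that, the argument is a mechanical application of Theorem \ref{th-A} combined with classical lattice-point counting, with the real content already absorbed into the combinatorial Theorem \ref{th-A}.
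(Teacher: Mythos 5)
Your proof is correct and is exactly the squeeze argument implicitly intended by the paper, which states the corollary immediately after Theorem \ref{th-A} without further comment: the two containments in Theorem \ref{th-A} sandwich $\#(k*A)$ between $\#((k\Delta(A))_C\cap\z^n)$ and $\#(k\Delta(A)\cap\z^n)$, and both counts are asymptotic to $k^n\Vol_n(\Delta(A))$ by standard lattice-point estimates for convex bodies. Your parenthetical about full-dimensionality and the tube estimate for the eroded body $(k\Delta(A))_C$ are the right ancillary observations, and they fill in precisely the routine details the paper leaves to the reader.
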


Corollary \ref {cor-vol-Delta(A)} together with the Hilbert theorem (Theorem
\ref{th-Hil}) proves the Ku\v{s}nirenko theorem for sets $A$ such
that the differences $a-b$ for $a, b \in A$ generate the group
$\z^n$. The Ku\v{s}nirenko theorem for the general case
easily follows from this. The Bernstein theorem \ref {th-Bern} follows from the
Ku\v{s}nirenko theorem \ref {th-Kush} and the identity
$L_{A+B}=L_AL_B$.

\section{Graded semigroups in $\n \oplus \z^n$ and Newton convex body} \label{sec-semi-gp}
Let $S$ be a subsemi-group of $\n \oplus \z^n$. For any integer $k > 0$ we denote by $S_k$
the section of $S$ at level $k$, i.e. the set of elements $x \in \z^n$ such that $(k, x) \in S$.

\begin{Def} \label{def-graded-semi-gp}
A subsemi-group $S$ of $\n \oplus \z^n$ is called:

1) a {\it graded semi-group} if for any $k>0$, $S_k$ is finite and
non-empty;

2) an {\it ample semi-group} if there is a natural $m$ such that the
differences $a-b$ for $a, b \in S_m$ generate the group $\z^n$;

3) a {\it semi-group with restricted growth} if there is constant
$C$ such that for any $k>0$ we have $\#(S_k) \leq Ck^n$.
\end{Def}

For a graded semi-group $S$, let $\Con(S)$ denote the convex hull of
$S \cup \{0\}$. It is a cone in $\r^{n+1}$. Denote by $M_S$ the
semi-group $\Con(S) \cap (\n \oplus \z^n)$. The semigroup $M_S$
contains the semigroup $S$.

\begin{Def} \label{def-Newton-conv-set-semi-gp}
For a graded semi-group $S$, define the {\it Newton convex set}
$\Delta(S)$ to be the section of the cone $\Con(S)$ at $k=1$, i.e.
$$\Delta(S) = \{x \mid (1, x) \in \Con(S) \}.$$
\end{Def}

\begin{Th}[Asymptotic of  graded semi-groups] \label{th-asym-graded-semi-gp}
Let $S$ be an ample graded semi-group with restricted growth in $\n
\oplus \z^n$.  Then:

1) the cone $Con(S)$ is strictly convex, i.e. the Newton convex set
$\Delta(S)$  is bounded;

2) Let $d(k)$ denote the maximum distance of the points $(k, x)$ from the boundary of
$\Con(S)$ for $x \in M_S(k) \setminus S(k)$. Then
$$\lim_{k \to \infty} \frac{d(k)}{k} = 0.$$
\end{Th}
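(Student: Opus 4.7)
My plan is to deduce both parts from Theorem~\ref{th-A} applied to a single finite set $A = S_M \subset \z^n$ at a carefully chosen level $M$. The key mechanism is that $k*S_M \subset S_{kM}$, so whenever the differences of $S_M$ generate $\z^n$, Theorem~\ref{th-A}(2) both gives the lower bound $\#S_{kM} \geq (1-o(1))\,k^n\,\Vol(\Delta(S_M))$ and forces almost every integer point of $k\Delta(S_M)$ to already lie in $S_{kM}$.

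For part 1, I argue by contradiction: suppose $\Delta(S)$ is unbounded. Ampleness gives $\Delta(S) \supset \Delta(S_m)/m$ with nonempty interior, so for any prescribed $V$ I can find a rational point $p\in\Delta(S)$ so distant that the convex hull of $\Delta(S_m)/m$ with $\{p\}$ has volume $\geq V$. Writing $p$ as a positive rational combination of elements of $S$ and clearing denominators shows $p = x^*/k^*$ for some $(k^*,x^*)\in S$. Setting $M = \textup{lcm}(m,k^*)$, both $(M/m)*S_m$ and $(M/k^*)x^*$ lie in $S_M$, so $\Delta(S_M)/M$ contains that convex hull, while differences in $(M/m)*S_m \subset S_M$ already generate $\z^n$. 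Theorem~\ref{th-A}(2) then gives $(1-o(1))\,k^n\,\Vol(\Delta(S_M)) \leq \#S_{kM}$, and restricted growth gives $\#S_{kM} \leq C(kM)^n$. Together they force $\Vol(\Delta(S_M)/M)\leq C$, contradicting that $V$ could be made arbitrarily large.

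For part 2, fix $\delta>0$; I first locate an $M$ for which $\Delta(S_M)/M$ contains the $(\delta/3)$-inner core $\{p\in\Delta(S): \textup{dist}(p,\partial\Delta(S))\geq \delta/3\}$. Since $q*S_N \subset S_{qN}$ implies $\Delta(S_N)/N \subset \Delta(S_{qN})/(qN)$, the polytopes $\Delta(S_N)/N$ are monotone along $N_1\mid N_2\mid\cdots$, and the rational-decomposition argument above shows every rational point of $\Delta(S)$ lies in some $\Delta(S_N)/N$. Hence $\bigcup_N \Delta(S_N)/N$ is a convex subset of $\Delta(S)$ whose closure equals $\Delta(S)$, and because a convex set shares its interior with its closure, this union already contains all of $\textup{int}\,\Delta(S)$; compactness of the inner core then yields the desired $M$.

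Now take $(k,x)\in M_S(k)$ with $x/k$ at distance $\geq\delta$ from $\partial\Delta(S)$, write $k=qM+r$ with $0\leq r<M$, pick any $x''\in S_r$ (take $x''=0$ if $r=0$), and set $y=x-x''$. For $k$ large, $y/(qM)$ differs from $x/k$ by $O(1/q)$, so is still $(\delta/2)$-deep in $\Delta(S)$, hence $(\delta/6)$-deep in $\Delta(S_M)/M$ by the choice of $M$; rescaling by $qM$, the point $y$ is $(qM\delta/6)$-deep in $q\Delta(S_M)$, so Theorem~\ref{th-A}(2) places $y\in q*S_M\subset S_{qM}$, and therefore $x=y+x''\in S_k$. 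Equivalently, every $(k,x)\in M_S(k)\setminus S_k$ has $x/k$ within $\delta$ of $\partial\Delta(S)$; using the bounded geometry of $\Delta(S)$ (part 1) this translates to $d(k)\leq C'\,\delta\,k$ for $k$ large, and as $\delta$ was arbitrary, $d(k)/k\to 0$. The main subtlety I expect is the density–monotonicity step identifying the level $M$; once that is in place, both parts reduce to rescalings of Theorem~\ref{th-A}.
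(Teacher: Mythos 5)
Your proof is correct, and it carries out in detail precisely the deduction from Theorem~\ref{th-A} that the paper itself only indicates (the text says Theorem~\ref{th-asym-graded-semi-gp} ``basically follows from Theorem~\ref{th-A}'' and defers the sketch to the arXiv preprint). The mechanism you use---applying Theorem~\ref{th-A} to $A=S_M$ at suitable levels $M$ and exploiting the semigroup inclusion $k*S_M\subset S_{kM}$ together with the directedness of the family $\Delta(S_N)/N$ along divisibility---is exactly the intended route, and the subtleties you flag (rational interior points of $\Con(S)$ being rational combinations of $S\cup\{0\}$, and the compactness step producing a single level $M$) are handled correctly.
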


Theorem \ref{th-asym-graded-semi-gp} basically follows from
Theorem \ref{th-A}. For the sketch of its proof see \cite{Askold-Kiumars-arXiv}.

\begin{Cor} \label{cor-Sk-vol-Delta}
Let $S$ be an ample graded semi-group with restricted growth in $\n
\oplus \z^n$. Then
$$\lim_{k \to \infty} \frac{\#(S_k)}{k^n} = \Vol_n(\Delta(S)).$$
\end{Cor}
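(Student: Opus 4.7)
The plan is to sandwich $\#(S_k)$ between two lattice-point counts that both tend to $k^n \Vol_n(\Delta(S))$ after rescaling. By construction, the section of $\Con(S)$ at level $k$ is $\{k\} \times k\Delta(S)$, so
$$M_S(k) = k\Delta(S) \cap \z^n.$$
Since $S_k \subseteq M_S(k)$ and $\Delta(S)$ is a bounded convex body by Theorem \ref{th-asym-graded-semi-gp}(1), the classical lattice-point asymptotic for dilates of a bounded convex body yields the upper bound
$$\#(S_k) \leq \#(k\Delta(S) \cap \z^n) = k^n \Vol_n(\Delta(S)) + O(k^{n-1}),$$
hence $\limsup_{k \to \infty} \#(S_k)/k^n \leq \Vol_n(\Delta(S))$.

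For the matching lower bound I invoke Theorem \ref{th-asym-graded-semi-gp}(2): every $x \in M_S(k) \setminus S_k$ has $(k,x)$ within $\r^{n+1}$-distance $d(k)$ of $\partial \Con(S)$. Since $\Con(S)$ is the cone over the bounded set $\Delta(S)$, an elementary geometric comparison (the horizontal distance of $x$ to $\partial(k\Delta(S))$ differs from the ambient $\r^{n+1}$-distance of $(k,x)$ to $\partial \Con(S)$ by at most a factor depending only on the diameter of $\Delta(S)$) forces $x$ into the $C d(k)$-neighbourhood $T_k$ of $\partial(k\Delta(S))$ inside $k\Delta(S)$, for a constant $C$ depending only on $\Delta(S)$. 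The standard boundary-tube estimate for a bounded convex body gives $\Vol_n(T_k) \leq C' d(k) k^{n-1}$, so $\#(T_k \cap \z^n) = O(d(k)\, k^{n-1}) + O(k^{n-1})$. Because $d(k)/k \to 0$, this is $o(k^n)$, and therefore
$$\#(S_k) \geq \#(M_S(k)) - \#(T_k \cap \z^n) = k^n \Vol_n(\Delta(S)) - o(k^n).$$

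Combining the two estimates yields $\#(S_k)/k^n \to \Vol_n(\Delta(S))$, which is the claim. The genuinely hard ingredient is Theorem \ref{th-asym-graded-semi-gp}(2), which guarantees that the lattice points missing from $S_k$ relative to $M_S(k)$ are confined to an asymptotically negligible boundary layer; once that is granted, the rest of the corollary reduces to routine convex-geometric lattice counting in dilates of a bounded convex body, so all the substantive work has already been carried out in the preceding theorem.
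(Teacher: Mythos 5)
Your proof is correct and uses exactly the ingredients the paper identifies: part~(1) of Theorem~\ref{th-asym-graded-semi-gp} to obtain the upper bound via lattice-point counting in dilates of the bounded body $\Delta(S)$, and part~(2) to confine the defect $M_S(k)\setminus S_k$ to a boundary layer of thickness $o(k)$ whose lattice-point count is $o(k^n)$. The paper presents the corollary as an immediate consequence of Theorem~\ref{th-asym-graded-semi-gp} (deferring details to the arXiv preprint), and your derivation is the intended one.
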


\section{Valuations on the field of rational functions}
We start with the definition of a pre-valuation. Let $V$ be a vector space and let $I$ be a set
totally ordered with respect to some ordering $<$.

\begin{Def}
A {\it pre-valuation}
on $V$ with values in $I$ is a function $v: V\setminus \{0\} \to I$ satisfying the following:
1) For all $f,g \in V$, $v(f+g) \geq \min(v(f), v(g))$;
2) For all $f \in V$ and $\lambda \neq 0$, $v(\lambda f) = v(f)$;
3) If for $f, g \in V$ we have $v(f) = v(g)$ then there is $\lambda \neq 0$ such that
$v(g - \lambda f) > v(g)$.
\end{Def}

It is easy to verify that if $L \subset V$ is a finite dimensional subspace then
$\dim(L)$ is equal to $\#(v(L))$.

\begin{Ex} \label{ex-pre-valuation}
Let $V$ be a finite dimensional vector space with basis $\{e_1, \ldots, e_n\}$ and let
$I = \{1, \ldots, n\}$ with usual ordering of numbers. For $f = \sum_i \lambda_i e_i$ define
$$v(f) = \min\{ i \mid \lambda_i \neq 0\}.$$
\end{Ex}

\begin{Ex} [Schubert cells in the Grassmannian] \label{ex-Grassmannian}
Let $\textup{Gr}(n, k)$ be the Grassmannian of $k$-dimensional
planes in $\c^n$.  In Example \ref{ex-pre-valuation} take $V=\c^n$
with standard basis. Under the pre-valuation $v$ above each
$k$-dimensional subspace $L \subset \c^n$ goes to a subset $M\subset
I$ containing $k$ elements. The set of all $k$-dimensional subspaces
which are mapped onto $M$ forms the {\it Schubert cell $X_M$} in the
Grassmannian $\textup{Gr}(n, k)$.
\end{Ex}

In a similar fashion to Example \ref {ex-Grassmannian}  the Schubert
cells in the variety of complete flags can also be recovered from
the pre-valuation $v$ above on $\c^n$.

Next we define the notion of a valuation with values in a totally ordered abelian group.

\begin{Def}
Let $K$ be a field and $\Gamma$ a totally ordered abelian group.
A pre-valuation $v: K\setminus \{0\} \to \Gamma$ is a {\it valuation} if moreover it satisfies the
following: for any $f, g \in K$ with $f, g \neq 0$, we have $$v(fg) = v(f) + v(g).$$
The valuation $v$ is called {\it faithful} if its image is the whole $\Gamma$.
\end{Def}

We will only be concerned with the field $\c(X)$ of rational
functions on an $n$-dimensional irreducible variety $X$, and
$\z^n$-valued valuations on it (with respect to some total order on
$\z^n$).

\begin{Ex} \label{ex-valuation-curve}
Let $X$ be an irreducible  curve. Take the field of rational
functions $\c(X)$ and $\Gamma = \z$. Take a smooth point $a$ on $X$.
Then the map
$$v(f) = \ord_a(f)$$ defines a faithful valuation on $\c(X)$.
\end{Ex}

\begin{Ex} \label{ex-valuation-Grobner}
Let $X$ be an irreducible $n$-dimensional variety. Take a smooth
point $a \in X$. Consider a local system of coordinates with
analytic coordinate functions $x_1, \ldots, x_n$ and with the origin
at the point $a$. Let $\Gamma = \z^n_+$ be the semigroup in $\z^n$
of points with non-negative coordinates. Take any well-ordering
$\prec$ which respects the addition, i.e. if $a\prec b$ then
$a+c\prec b+c$.  For a germ $f$ at the point $a$ of an analytic
function in $x_1, \ldots, x_n$ let $c x^{\bold \alpha (f)}=
cx_1^{\alpha_1} \cdots x_n^{\alpha_n}$ be the term in the Taylor
expansion of $f$ with minimum exponent $\alpha (f) =(\alpha_1,
\ldots, \alpha_n)$, with respect to the ordering $\prec$. For a germ
$F$ at the point $a$ of a meromorphic function $F=f/g$  define
$v(F)$ as $\alpha (f)-\alpha (g)$. This function $v$ induces a
faithful valuation on the field of rational functions $\c(X)$.
\end{Ex}

\begin {Ex} Let $X$ be an irreducible $n$-dimensional variety and
$Y$ any variety birationally isomorphic to $X$. Then
fields $\c(X)$ and $\c(Y)$ are isomorphic and thus any faithful
valuation on $\c(Y)$ gives a faithful valuation on $\c(X)$ as well.
\end {Ex}


\section{Main construction and theorem} \label{sec-main}
Let $X$ be an irreducible $n$-dimensional variety. Fix a faithful valuation
$v: \c(X) \to \z^n$, where $\z^n$ is equipped with any total ordering respecting addition.

Let $L \in \K$ be a finite dimensional subspace of rational functions.
Consider the semi-group $S(L)$ in $\n \oplus \z^n$ defined by
$$S(L) = \bigcup_{k>0} \{(k, v(f)) \mid f \in L^k\}.$$
It is easy to see that $S(L)$ is a graded semi-group. Moreover by
the Hilbert theorem $S(L)$ is contained in a semi-group of
restricted growth.

\begin{Def}[Newton convex body for a subspace of rational functions]
We define the {\it Newton convex body} for a subspace $L$ to be the
convex body $\Delta(S(L))$ associated to the semi-group $S(L)$.
\end{Def}

Denote by $s(L)$  the index of the subgroup in $\z^{n}$ generated by
all the differences $a-b$ such that $a$, $b$
belong to the same set $S_m(L)$ for some $m>0$. Also let $Y_L$
be the closure of the image of the variety $X$ (in fact image of a Zariski open
subset of $X$) under the Kodaira rational map
$\Phi_L: X \to \p(L^*)$. If $\dim(Y_L)$ is equal to $\dim(X)$ then the Kodaira map from $X$ to
$Y_L$ has finite mapping degree. Denote this mapping degree by
$d(L)$.

\begin{Th}[Main theorem] \label{th-main} Let $X$ be an irreducible
$n$-dimensional \qp variety and let $L\in\K$ with the Kodaira map $\Phi_L: X \to
\p(L^*)$. Then:

1) Complex dimension of the variety $Y_L$ is equal to the real
dimension of the Newton convex body $\Delta(S(L))$.

2) If $\dim(Y_L)=n$ then
$$[L, \ldots, L] = \frac{n!d(L)}{s(L)} \Vol_n(\Delta(S(L))).$$

3) In particular,  if $\Phi_L: X\rightarrow Y_L $ is a birational
isomorphism then
$$[L,\dots,L]= n! \Vol_n(\Delta(S(L))).$$

4) For any two subspaces $L_1, L_2 \in \K$ we have
$$\Delta(S(L_1))+\Delta(S(L_2)) \subseteq \Delta(S(L_1L_2)).$$
\end{Th}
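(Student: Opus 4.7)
The proof rests on two bridges between the linear algebra of $L^{k}$ and the combinatorics of $S(L)$: first, the pre-valuation identity $\dim(L^{k}) = \#S_{k}(L)$, which holds because a pre-valuation on a finite-dimensional subspace has image of size equal to its dimension; second, the asymptotic count of Corollary \ref{cor-Sk-vol-Delta}, which translates the Euclidean volume of $\Delta(S(L))$ into the leading-order growth of $\#S_{k}(L)$. Paired with Hilbert's theorem (Theorem \ref{th-Hil}), these immediately yield Parts (1)--(3). Part (4) is independent of this asymptotic machinery and follows from the multiplicative property of the valuation.

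\textbf{Parts (1), (2) and (3).} Hilbert's theorem says $\dim(L^{k})$ is eventually a polynomial in $k$ of degree $\dim(Y_{L})$ with leading coefficient $\deg(Y_{L})/n!$ (when top-dimensional). Since $\dim(L^{k}) = \#S_{k}(L)$, and the latter grows as $k^{\dim \Delta(S(L))}$ (after passing to the sub-lattice spanned by the differences and invoking Theorem \ref{th-asym-graded-semi-gp}), comparing degrees gives Part (1). For Part (2), assume $\dim(Y_{L}) = n$, so $\Delta(S(L))$ is full-dimensional. Within the index-$s(L)$ sub-lattice $\Lambda \subset \z^{n}$, the semi-group $S(L)$ becomes ample with restricted growth, and Corollary \ref{cor-Sk-vol-Delta} (applied in $\Lambda$) gives
\[
\lim_{k \to \infty} \frac{\#S_{k}(L)}{k^{n}} = \frac{\Vol_{n}(\Delta(S(L)))}{s(L)}.
\]
Matching this with Hilbert's leading coefficient yields $\deg(Y_{L}) = n!\, \Vol_{n}(\Delta(S(L)))/s(L)$. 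Geometrically, a generic $n$-tuple $(f_{1}, \ldots, f_{n}) \in L^{n}$ defines $n$ generic hyperplanes in $\p(L^{*})$ intersecting $Y_{L}$ in $\deg(Y_{L})$ points, each hit by exactly $d(L)$ preimages in $X$ via the Kodaira map; hence $[L, \ldots, L] = d(L)\, \deg(Y_{L})$. Combining the two equalities gives Part (2). Part (3) is the specialization where $\Phi_{L}$ is birational, so $d(L) = 1$; under this hypothesis the ratios $f/g$ for $f, g \in L$ generate $\c(X)$, and faithfulness of $v$ forces the sub-lattice generated by the differences to be all of $\z^{n}$, i.e.\ $s(L) = 1$.

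\textbf{Part (4).} Since $v$ is a valuation, $v(fg) = v(f) + v(g)$, and the identity $(L_{1}L_{2})^{k} = L_{1}^{k} L_{2}^{k}$ implies that $fg \in (L_{1}L_{2})^{k}$ whenever $f \in L_{1}^{k}$ and $g \in L_{2}^{k}$. Therefore $S_{k}(L_{1}) + S_{k}(L_{2}) \subseteq S_{k}(L_{1}L_{2})$ for every $k$, which passes to convex hulls as $\Con(S(L_{1})) + \Con(S(L_{2})) \subseteq \Con(S(L_{1}L_{2}))$. Slicing at level one gives the desired inclusion $\Delta(S(L_{1})) + \Delta(S(L_{2})) \subseteq \Delta(S(L_{1}L_{2}))$.

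\textbf{Main obstacle.} The crux is Part (2). Two subtleties must be handled carefully. First, $S(L)$ is a priori only ample within a sub-lattice of $\z^{n}$ of index $s(L)$, so Corollary \ref{cor-Sk-vol-Delta} cannot be applied verbatim; one must pass to this sub-lattice, in which the relevant Euclidean volume is rescaled by $1/s(L)$, and separately confirm that $S(L)$ has restricted growth (which follows from Hilbert's polynomial bound on $\dim(L^{k})$, but the logical order must be arranged to avoid circularity). Second, the identity $[L, \ldots, L] = d(L) \deg(Y_{L})$ requires a Bertini-type transversality argument showing that for generic $(f_{1}, \ldots, f_{n}) \in L^{n}$ the system $f_{1} = \cdots = f_{n} = 0$ has no solutions on the bad locus $\Sigma$ and meets $Y_{L}$ transversely, so that the fibre count under the Kodaira map indeed produces $d(L) \deg(Y_{L})$ points in $X \setminus \Sigma$.
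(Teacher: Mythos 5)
Your route through Parts (1)--(3)---the pre-valuation identity $\dim L^{k}=\#S_{k}(L)$, Hilbert's theorem, the asymptotic count from Corollary~\ref{cor-Sk-vol-Delta} applied inside the index-$s(L)$ sub-lattice, and the fibre count $[L,\ldots,L]=d(L)\deg Y_{L}$---is exactly the scheme the paper indicates (it defers the details to the companion preprints but names Theorem~\ref{th-asym-graded-semi-gp} and Theorem~\ref{th-Hil} as the two pillars). One caveat: your argument that $\Phi_L$ birational forces $s(L)=1$ ``by faithfulness of $v$'' is not airtight as stated, because the group generated by the $v$-values of a multiplicative system can be strictly smaller than the value group of the field that system generates (for $v=\ord_0$ on $\c(t)$, the group $\langle t^{2},\,1+t\rangle$ has $v$-image $2\z$ although it generates $\c(t)$ as a field). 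This step needs a real argument, though I accept it at sketch level.

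The genuine error is in Part (4). The implication you draw from $S_{k}(L_{1})+S_{k}(L_{2})\subseteq S_{k}(L_{1}L_{2})$ to $\Con(S(L_{1}))+\Con(S(L_{2}))\subseteq \Con(S(L_{1}L_{2}))$ is false: the level-wise inclusion only constrains sums of elements \emph{at the same level}, whereas the Minkowski sum of the two cones mixes arbitrary levels. Concretely, on $X=\c^{*}$ with $v=\ord_{0}$, take $L_{1}=\Span\{1\}$ and $L_{2}=\Span\{t^{2}\}$. Then $\Con(S(L_{1}))$ is the ray through $(1,0)$ and $\Con(S(L_{2}))$ is the ray through $(1,2)$, so their Minkowski sum is the two-dimensional cone $\{0\le w\le 2u\}$; yet $L_{1}L_{2}=L_{2}$, so $\Con(S(L_{1}L_{2}))$ is still the single ray through $(1,2)$, and the alleged inclusion fails. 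The second step, ``slicing at level one,'' is also wrong in general: for strictly convex cones $C_{1},C_{2}$ with level-one slices $\Delta_{1},\Delta_{2}$, the level-one slice of $C_{1}+C_{2}$ is $\{t p_{1}+(1-t)p_{2}: 0\le t\le 1,\, p_{i}\in\Delta_{i}\}$, i.e.\ the convex hull of $\Delta_{1}\cup\Delta_{2}$, \emph{not} $\Delta_{1}+\Delta_{2}$. In the example above your argument would deliver $[0,2]\subseteq\{2\}$, which is false, even though the theorem's conclusion $\{0\}+\{2\}\subseteq\{2\}$ is true. The fix is to keep levels matched rather than pass through cones: a point $p\in\Delta(S(L_{1}))$ is a convex combination of points $a_{i}/k_{i}$ with $a_{i}\in S_{k_{i}}(L_{1})$, and using the semi-group property one may take all $k_{i}$ equal to a common $N$, and likewise represent $q\in\Delta(S(L_{2}))$ with the same $N$; then $p+q=\sum_{i,j}\beta_{i}\gamma_{j}\,(a_{i}+b_{j})/N$ is a convex combination of points of $S_{N}(L_{1}L_{2})/N$ by your level-$N$ inclusion, hence lies in $\Delta(S(L_{1}L_{2}))$.
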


The proof of the main theorem is based on Theorem
\ref{th-asym-graded-semi-gp} (which describes the asymptotic behavior of
an ample graded semigroup of restricted growth) and the Hilbert
(Theorem \ref {th-Hil}). The sketch of proof can be found in \cite{Askold-Kiumars-arXiv}.

\section{Algebraic analogue of Alexandrov-Fenchel  inequalities} \label{sec-Alexandrov-Fenchel}
Part (2) of the main theorem (Theorem \ref {th-main}) can be considered
as a wide-reaching generalization of the Ku\v{s}nirenko theorem,
in which instead of $(\c^*)^n$ one takes any
$n$-dimensional irreducible variety $X$, and instead of a finite
dimensional space generated by monomial one takes any finite
dimensional space $L$ of rational functions.
The proof of Theorem \ref{th-main} is an extension
of the arguments used in \cite{Khovanskii-finite-sets}
to prove Ku\v{s}nirenko theorem (see also Section \ref {sec-proof-B-K}).
As we  mentioned the Bernstein theorem (Theorem \ref {th-Bern}) follows
immediately from the Ku\v{s}nirenko theorem and the identity
$$L_{A+B}=L_AL_B.$$ Thus the Bernstein-Ku\v{s}nirenko theorem is
a corollary of our Theorem \ref{th-main}.

Note that although the Newton convex body
$\Delta(S(L))$ depends on a choice of a faithful valuation, its volume
depends on $L$ only: after multiplication by $n!$ it equals
the self-intersection index $[L,\dots, L]$.

Our generalization of the Ku\v{s}hnirenko theorem does not imply the
generalization of the Bernstein theorem. The point is that
in general we do not always have an equality $\Delta(S(L_1))+\Delta(S(L_2)) =
\Delta(S(L_1L_2))$. In fact, by Theorem \ref{th-main}(4), what is always true is the inclusion
$$\Delta(S(L_1))+\Delta(S(L_2)) \subseteq \Delta(S(L_1L_2)).$$
This inclusion is sufficient for us to prove the following
interesting corollary:

Let us call a subspace $L \in \K$ a {\it big subspace} if for some $m>0$ the Kodaira rational map
of $L^m$ is a birational isomorphism between $X$ and its image. It is not hard to show that
the product of two big subspaces is again a big subspace and thus the big subspaces form a subsemi-group
of $\K$.
\begin{Cor}[Algebraic analogue of Brunn-Minkowski] \label{cor-Brunn-Mink-int-index}
Assume that $L,G\in \K$ are big subspaces.
Then $$[L,\dots, L]^{1/n} +[G,\dots, G]^{1/n}\leq [LG,\dots,
LG]^{1/n}.$$
\end{Cor}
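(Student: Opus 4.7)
The plan is to bridge algebra and convex geometry via Theorem \ref{th-main}, and then reduce the desired inequality to the classical Brunn--Minkowski inequality (Theorem \ref{th-Brunn-Mink}) applied to the Newton convex bodies $\Delta(S(L))$ and $\Delta(S(G))$.

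The first step is to upgrade part (3) of Theorem \ref{th-main} to the statement that
\[
[L, \ldots, L] = n! \, \Vol_n(\Delta(S(L)))
\]
for \emph{every} big subspace $L \in \K$, not only for those whose Kodaira map $\Phi_L$ is itself a birational isomorphism. By the definition of bigness, there exists $m>0$ for which $\Phi_{L^m}$ is birational, so Theorem \ref{th-main}(3) applied to $L^m$ gives $[L^m, \ldots, L^m] = n!\,\Vol_n(\Delta(S(L^m)))$. On the left, $n$-fold application of multi-linearity (Theorem \ref{th-multi-lin-int-index}(1)) produces $[L^m, \ldots, L^m] = m^n [L, \ldots, L]$. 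On the right, the slice identity $S_k(L^m) = S_{mk}(L)$ together with Corollary \ref{cor-Sk-vol-Delta} yields
\[
\Vol_n(\Delta(S(L^m))) = \lim_{k \to \infty} \frac{\#(S_{mk}(L))}{k^n} = m^n \Vol_n(\Delta(S(L))).
\]
Dividing through by $m^n$ gives the desired identity. Since the big subspaces form a sub-semi-group of $\K$, the same identity holds for $G$ and, crucially, for $LG$.

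With this in hand, the second step combines Theorem \ref{th-main}(4) (the inclusion $\Delta(S(L)) + \Delta(S(G)) \subseteq \Delta(S(LG))$), monotonicity of volume, and Brunn--Minkowski in the chain
\[
\Vol_n(\Delta(S(L)))^{1/n} + \Vol_n(\Delta(S(G)))^{1/n} \leq \Vol_n(\Delta(S(L)) + \Delta(S(G)))^{1/n} \leq \Vol_n(\Delta(S(LG)))^{1/n}.
\]
Multiplying through by $(n!)^{1/n}$ and substituting the volume identity from the first step for $L$, $G$, and $LG$ finishes the argument.

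The main obstacle is the very first step: extending the clean volume formula from the strictly birational case (to which Theorem \ref{th-main}(3) directly applies) to all big subspaces. Once that passage is secured through multi-linearity and the scaling of Newton bodies, the remainder is a one-line application of monotonicity and the classical Brunn--Minkowski inequality, which explains the authors' remark that Theorem \ref{th-main}(4) is \emph{sufficient} even though the stronger equality $\Delta(S(L_1)) + \Delta(S(L_2)) = \Delta(S(L_1 L_2))$ fails in general.
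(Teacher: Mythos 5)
Your proposal is correct and follows essentially the same route as the paper: both reduce to the birational case by passing to $L^m$ via multi-linearity (Theorem \ref{th-multi-lin-int-index}), then apply Theorem \ref{th-main}(4), monotonicity of volume, and the classical Brunn--Minkowski inequality. The only cosmetic difference is that you package the reduction as an explicit lemma (upgrading Theorem \ref{th-main}(3) to all big subspaces, with the auxiliary scaling $\Vol_n(\Delta(S(L^m))) = m^n \Vol_n(\Delta(S(L)))$), whereas the paper simply observes that the desired inequality is invariant under $L\mapsto L^m$, $G\mapsto G^m$ and applies Theorem \ref{th-main}(3) directly to $L^m$, $G^m$, $(LG)^m$ after this WLOG.
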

\begin{proof} Since replacing $L$ and $G$ by $L^m$ and $G^m$ does not change the inequality, without loss of generality,
we can assume that the Kodaira maps of $L$ and $G$ are birational isomorphisms onto their images.
From Part (4) in Theorem \ref{th-main} we
have $\Delta (S(L))+\Delta (S(G))\subseteq \Delta (S(LG))$. So $\Vol
(\Delta (S(L))+\Delta (S(G)))\leq \Vol(\Delta (S(LG)))$. Also from Part (3) in
the same theorem we have $$[L,\dots,L]= n!
\Vol(\Delta(S(L)),$$ $$[G,\dots,G]= n! \Vol(\Delta(S(G)),$$
$$[LG,\dots,LG]= n! \Vol(\Delta(S(LG)).$$ To complete the proof
it is enough to use the Brunn-Minkowski inequality.
\end{proof}

\begin{Cor}[A version of Hodge inequality] \label{cor-Hodge} . If $L,G\in \K$ are big subspaces
and $X$ is an algebraic surface then
$$[L,L][G,G]\leq [L,G]^2.$$
\end{Cor}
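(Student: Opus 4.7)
The plan is to mimic, in the algebraic setting, the derivation of the planar isoperimetric inequality from the two-dimensional Brunn-Minkowski inequality given in the proof of Theorem \ref{th-isoper}. The essential new input is Corollary \ref{cor-Brunn-Mink-int-index} (algebraic Brunn-Minkowski), which plays the role of the classical Brunn-Minkowski, and the multi-linearity of the intersection index (Theorem \ref{th-multi-lin-int-index}), which plays the role of the expansion $\Area(\Delta_1+\Delta_2)=\Area(\Delta_1)+2A(\Delta_1,\Delta_2)+\Area(\Delta_2)$.

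First I would verify that Corollary \ref{cor-Brunn-Mink-int-index} applies to the pair $(L,G)$. This requires $L$, $G$, and $LG$ to all be big. By hypothesis $L$ and $G$ are big, and the excerpt records that the big subspaces form a subsemi-group of $\K$, so $LG$ is big as well. Applied on the surface $X$ (so $n=2$), Corollary \ref{cor-Brunn-Mink-int-index} gives
$$[L,L]^{1/2} + [G,G]^{1/2} \leq [LG, LG]^{1/2}.$$
Squaring both sides yields
$$[L,L] + 2\,[L,L]^{1/2}[G,G]^{1/2} + [G,G] \leq [LG, LG].$$

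Next I would expand the right-hand side using bilinearity of the intersection index (Theorem \ref{th-multi-lin-int-index}(1)):
$$[LG, LG] = [L,L] + 2[L,G] + [G,G].$$
Subtracting $[L,L] + [G,G]$ from both sides and dividing by $2$ gives
$$[L,L]^{1/2}[G,G]^{1/2} \leq [L,G],$$
and squaring produces the desired inequality $[L,L][G,G] \leq [L,G]^2$.

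There is essentially no hard step here once the earlier machinery is in place; the whole content lies in invoking the algebraic Brunn-Minkowski inequality, which in turn rests on the inclusion $\Delta(S(L))+\Delta(S(G)) \subseteq \Delta(S(LG))$ from Theorem \ref{th-main}(4) together with Theorem \ref{th-main}(3). The only point requiring a brief verification is that bigness is preserved under products, so that Corollary \ref{cor-Brunn-Mink-int-index} and Theorem \ref{th-main}(3) are legitimately applicable to $LG$; this is the closest thing to an obstacle, and it has already been noted in the text just before the statement of Corollary \ref{cor-Brunn-Mink-int-index}.
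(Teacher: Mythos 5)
Your proof is correct and is essentially the same as the paper's: both expand $[LG,LG]=[L,L]+2[L,G]+[G,G]$ via multi-linearity and compare against the square of Corollary \ref{cor-Brunn-Mink-int-index} applied with $n=2$. Your remarks about bigness of $LG$ and the role of Theorem \ref{th-main}(3),(4) are just a more explicit spelling-out of points the paper leaves implicit.
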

\begin{proof}
From Corollary \ref{cor-Brunn-Mink-int-index}, for $n=2$, we have
\begin{eqnarray*}
[L,L] +2[L,G] +[G,G] &=& [LG,LG] \cr &\geq& ([L,L]^{1/2}
+[G,G]^{1/2})^2  \cr &=& [L,L]+2[L,L]^{1/2}[G,G]^{1/2}+[G,G], \cr
\end{eqnarray*}
which readily implies Hodge inequality.
\end{proof}

Thus Theorem \ref{th-main} immediately enables us to reduce the Hodge
inequality to the isoperimetric inequality. This way, we can
easily prove an analogue of Alexandrov-Fenchel inequality and its corollaries
for intersection index:

\begin{Th}[Algebraic analogue of Alexandrov-Fenchel
inequality] \label{th-Alex-Fenchel-alg} Let $X$ be an irreducible
$n$-dimensional \qp variety and let $L_1,\dots,L_n\in \K$ be big subspaces.
Then the following inequality holds
$$[L_1,L_2, L_3,\dots,L_n]^2\geq [L_1,L_1,
L_3,\dots,L_n][L_2,L_2,L_3,\dots,L_n].$$
\end{Th}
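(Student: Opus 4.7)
The plan is to reduce the $n$-dimensional inequality to the surface case already established in Corollary \ref{cor-Hodge}, by cutting $X$ with generic elements drawn from $L_3,\dots,L_n$. This mirrors the classical Khovanskii-Teissier derivation of Alexandrov-Fenchel from the Hodge index theorem, here adapted to the setting of finite-dimensional subspaces of rational functions on a \qp variety.

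First I would normalize the $L_i$. By the multi-linearity in Theorem \ref{th-multi-lin-int-index}, replacing each $L_i$ with a power $L_i^{k_i}$ merely rescales both sides of the inequality by a common positive factor, so without loss of generality I may assume, using the hypothesis that the $L_i$ are big, that each Kodaira map $\Phi_{L_i}\colon X \dashrightarrow Y_{L_i}$ is a birational isomorphism onto its image.

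Second, I would choose generic $f_3\in L_3,\dots,f_n\in L_n$ and let $Y$ be a $2$-dimensional irreducible component of the common zero set $\{f_3=\cdots=f_n=0\}$ taken inside the complement of a hypersurface $\Sigma$ (containing the singular locus of $X$, the poles of elements of each $L_i$, and the base loci of the $L_i$). A Bertini-type argument applied to the Kodaira images, together with bigness, should ensure that for a generic choice of the $f_i$ the cut $Y$ is irreducible. The definition of the intersection index (counting generic zeros of $g_1\in L_1, g_2\in L_2$ on $X\setminus\Sigma$ that simultaneously satisfy $f_3=\cdots=f_n=0$) then yields the identification
\begin{equation*}
[L_1,L_2,L_3,\dots,L_n]_X \;=\; [L_1|_Y,\, L_2|_Y]_Y,
\end{equation*}
and, with $L_1$ or $L_2$ repeated, the analogous equalities for the two remaining terms of the desired inequality.

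Third, I would verify that $L_1|_Y$ and $L_2|_Y$ are still big in ${\bf K}_{rat}(Y)$. Since $Y$ is a generic complete-intersection subvariety and the Kodaira maps of $L_1,L_2$ on $X$ are birational, the restricted Kodaira maps on $Y$ should again be birational onto their images (after possibly passing to a further common power). Then Corollary \ref{cor-Hodge}, applied to the surface $Y$ and the big subspaces $L_1|_Y, L_2|_Y$, yields exactly the required inequality.

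The main obstacle is the geometric cutting step. Irreducibility of $Y$ is essential: if $Y$ split into several components $Y_1,\dots,Y_r$, summing the surface Hodge inequality over the $Y_j$ would not in general deliver the global inequality, because the componentwise bound $a_j^2 \geq b_j c_j$ does not imply $\bigl(\sum_j a_j\bigr)^2 \geq \bigl(\sum_j b_j\bigr)\bigl(\sum_j c_j\bigr)$. Ensuring irreducibility, together with preservation of bigness under restriction, requires a careful Bertini-type argument adapted to the \qp setting and to the behavior of rational functions along the auxiliary hypersurface $\Sigma$; this is where the bulk of the technical work lies.
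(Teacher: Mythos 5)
Your proposal is correct and follows essentially the route the paper has in mind: reduce the $n$-dimensional inequality to the surface case of Corollary \ref{cor-Hodge} by cutting $X$ with generic elements of $L_3,\dots,L_n$, using a Bertini-type argument to obtain an irreducible surface $Y$ on which $L_1|_Y$ and $L_2|_Y$ remain big, and then invoking the Hodge inequality on $Y$. You have also correctly located the technical crux (irreducibility of the cut and preservation of bigness under restriction), which is precisely the content deferred to the companion preprint.
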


\begin{Cor}[Corollaries of the algebraic analogue of
Alexandrov--Fenchel inequality] \label{cor-Alex-Fenchel-alg} Let $X$
be an $n$-dimensional irreducible \qp variety.

1) Let $2\leq m\leq n$ and $k_1+\dots+k_r=m$ with $k_i \in
\n$. Take big subspaces of rational functions $L_1, \ldots, L_n
\in \K$. Then $$[k_1*L_1, \ldots, k_r*L_r, L_{m+1},
\ldots, L_n]^m \geq \prod_{1 \leq j \leq r}[m*L_j, L_{m+1}, \ldots,
L_n]^{k_j}.$$

2)(Generalized Brunn-Minkowski inequality) For any fixed
big subspaces $L_{m+1}, \ldots, L_n \in \K$, the
function $$F: L \mapsto [m*L, L_{m+1}, \ldots, L_n]^{1/m},$$ is a
concave function on the semi-group of big subspaces.
\end{Cor}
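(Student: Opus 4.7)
The plan is to derive both parts of Corollary~\ref{cor-Alex-Fenchel-alg} formally from the algebraic Alexandrov--Fenchel inequality (Theorem~\ref{th-Alex-Fenchel-alg}), mimicking the standard derivations in the classical theory of mixed volumes. This works because products of big subspaces are big, so every intermediate intersection index to which Theorem~\ref{th-Alex-Fenchel-alg} will be applied has entries in the subsemigroup of big subspaces of $\K$.

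For Part~(1) I would induct on $r$; the case $r=1$ with $k_1=m$ is trivial. For the inductive step, fix $k_3,\dots,k_r$, set $K=k_1+k_2$, and define
$$g(i)\;=\;[\,i*L_1,\,(K-i)*L_2,\,k_3*L_3,\dots,k_r*L_r,\,L_{m+1},\dots,L_n\,],\qquad 0\le i\le K.$$
Applying Theorem~\ref{th-Alex-Fenchel-alg} with the first two slots equal to $L_1$ and $L_2$ and the remaining $n-2$ slots filled with $(i-1)$ copies of $L_1$, $(K-i-1)$ copies of $L_2$, $k_3$ copies of $L_3$, $\dots$, $k_r$ copies of $L_r$, together with the tail $L_{m+1},\dots,L_n$, yields $g(i)^2\ge g(i-1)\,g(i+1)$ for $1\le i\le K-1$. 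Discrete log-concavity on $\{0,1,\dots,K\}$ then gives $g(k_1)^K\ge g(0)^{k_2}g(K)^{k_1}$. But $g(0)$ and $g(K)$ are intersection indices involving only $r-1$ of the $L_j$, with part sizes again summing to $m$, so the inductive hypothesis bounds each of them by the desired product. Combining with the displayed log-concavity and extracting a $K$-th root produces Part~(1).

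For Part~(2) I would expand using the multi-linearity of the intersection index (Theorem~\ref{th-multi-lin-int-index}):
$$[\,m*(L_1L_2),\,L_{m+1},\dots,L_n\,]\;=\;\sum_{k=0}^{m}\binom{m}{k}\,[\,k*L_1,\,(m-k)*L_2,\,L_{m+1},\dots,L_n\,].$$
The $r=2$ case of Part~(1) bounds each summand below by $F(L_1)^k F(L_2)^{m-k}$, where $F(L)=[m*L,L_{m+1},\dots,L_n]^{1/m}$, and summing over $k$ with the binomial coefficients gives $[m*(L_1L_2),L_{m+1},\dots,L_n]\ge (F(L_1)+F(L_2))^m$. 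Extracting $m$-th roots yields $F(L_1L_2)\ge F(L_1)+F(L_2)$, which is the desired concavity in the semigroup of big subspaces (whose additive operation is product of subspaces, the algebraic analogue of Minkowski sum).

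The conceptual core is just the iteration of binary Alexandrov--Fenchel in Part~(1); the main obstacle I anticipate is purely bookkeeping, namely checking that the boundary terms $g(0)$ and $g(K)$ correspond to legitimate $(r-1)$-tuples with part sizes summing to $m$, and that bigness is inherited at each level by the products appearing in the slots. The edge cases $k_1=0$ or $k_2=0$ reduce directly to the inductive hypothesis at smaller $r$ and require no separate argument.
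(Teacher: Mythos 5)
Your proof is correct and is the standard formal derivation of both corollaries from the algebraic Alexandrov--Fenchel inequality; the paper omits it precisely because it is the same routine bookkeeping argument used to deduce these corollaries from Alexandrov--Fenchel in the convex-geometry setting (cf.\ the discussion preceding Corollary 4.5 and the cited references). Two small points worth making explicit in a written-up version: (a) the discrete log-concavity step $g(k_1)^K\ge g(0)^{k_2}g(K)^{k_1}$ implicitly uses positivity of the $g(i)$, which holds here because all entries are big subspaces (so all self-intersections are positive by Theorem~\ref{th-main} and the mixed indices are then positive by the Alexandrov--Fenchel chain itself); and (b) in the binomial expansion for Part~(2), the boundary terms $k=0$ and $k=m$ are not covered by the $r=2$ case of Part~(1) as literally stated (since there $k_i\in\n$), but they satisfy the needed bound with equality, so the sum still telescopes to $(F(L_1)+F(L_2))^m$.
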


As we saw above, Bernstein-Ku\v{s}nirenko theorem follows from the
main theorem. Applying algebraic analogue of Alexandrov-Fenchel
inequality to the situation considered in Bernstein-Ku\v{s}nirenko theorem
one can  prove  Alexandrov-Fenchel inequality for convex
polyhedra with integral vertices. By homogeneity it implies
Alexandrov-Fenchel inequality for convex polyhedra with rational
vertices. But since each convex body can be approximated by polyhedra with
rational vertices, by continuity we obtain a proof of
Alexandrov-Fenchel inequality in complete generality.

Thus Bernstein-Ku\v{s}nirenko theorem and Alexandrov-Fenchel
inequality in algebra and in geometry can be considered as
corollaries of our Theorem \ref{th-main}.

\section{Additivity of the Newton convex body for varieties with reductive group action}
While the additivity of the Newton convex body does not hold
in general but, as mentioned in Section \ref{sec-proof-B-K}, it holds for the subspaces $L_A$ of
Laurent polynomials on $(\c^*)^n$ spanned by monomials. The subspaces $L_A$ are exactly the subspaces which are stable under the natural action of the multiplicative group $(\c^*)^n$ on Laurent polynomials (induced by the natural action of $(\c^*)^n$ on itself).
We will see that the additivity generalizes to some classes of varieties with a reductive group action.

Let $G$ be a connected reductive algebraic group over $\c$, i.e. the complexification of a
connected compact real Lie group. Also let $X$ be a $G$-variety, that is a variety equipped with an algebraic
action of $G$.

The group $G$ naturally acts on $\c(X)$ by $(g \cdot f)(x) = f(g^{-1}\cdot x)$.
A subspace $L \in \K$ is $G$-stable if for any $f \in L$ and $g \in G$ we have $g \cdot f \in L$.

\begin{Th} \label{th-G-var-valuation}
Let $X$ be an $n$-dimensional variety with an algebraic action of $G$. Then
there is a naturally defined faithful valuation $v: \c(X) \to \z^n$ such that
for any $G$-stable subspace $L \in \K$, the Newton convex body $\Delta(S(L))$ is
in fact a polytope.
\end{Th}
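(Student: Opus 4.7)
The plan is to construct $v$ explicitly from the Borel structure of $G$, and then to show that for any $G$-stable $L\in\K$ the associated graded semi-group $S(L)$ is finitely generated; polytopality of $\Delta(S(L))$ will then follow at once, since $\Con(S(L))$ will be a rational polyhedral cone in $\r^{n+1}$ and $\Delta(S(L))$ is its section at height $1$.

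First I would fix a Borel subgroup $B=TU\subset G$ with maximal torus $T$ of rank $r$, and a total order on the character lattice $\z^r$ of $T$ that respects addition. Any $G$-stable $L\in\K$ is a finite-dimensional $G$-module and decomposes by highest weight as $L\cong\bigoplus_\lambda V_\lambda\otimes M_\lambda$, with $V_\lambda$ the irreducible of highest weight $\lambda$ and $M_\lambda$ the $\lambda$-weight subspace of the $U$-invariants $L^U$. The set $\Lambda(L)=\{\lambda:M_\lambda\neq 0\}$ is finite, and the highest weights appearing in $L^k$ lie in the $k$-fold Minkowski sum of $\Lambda(L)$. Consequently, the ``highest weight'' subsemi-group $\{(k,\lambda):\lambda\text{ occurs in }L^k\}$ of $\n\oplus\z^r$ is finitely generated.

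Next I would build $v$ in two layers. The first $r$ coordinates of $v(f)$ should record, with respect to the chosen order on $\z^r$, the $T$-weight of a $U$-invariant representative distinguished from the $G$-module generated by $f$ (for instance, the leading term under a $B$-compatible filtration). For the remaining $n-r$ coordinates, use a classical Okounkov-type flag valuation on the field $\c(X)^U$: pick a rational section of the $U$-quotient $X\dashrightarrow X/\!/U$ and a complete flag of subvarieties in it. Combining these produces a faithful valuation $v:\c(X)\to\z^n$ whose value on a $G$-stable $L$ depends only on the data $(L^U,\{M_\lambda\})$. Consequently $S(L)$ projects onto the finitely generated weight semi-group, and each fiber over a weight $\lambda$ is the image under the Okounkov part of $v$ of the finite-dimensional space $M_\lambda$ together with its products inside $L^k$; these fibers are themselves finitely generated. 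Hence $S(L)$ is finitely generated, $\Con(S(L))$ is rational polyhedral, and $\Delta(S(L))$ is a rational polytope.

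The hard part will be the clean construction of $v$ itself: one must ensure that $v$ is faithful and multiplicative while also separating the ``weight'' and ``Okounkov'' directions on every $G$-stable subspace simultaneously. This requires choosing a $B$-stable rational slice $Y\subset X$ such that $X$ is birational to $G\times_B Y$ (available via Chevalley's and Rosenlicht's theorems on quotients by reductive and solvable groups) and verifying that the $T$-weight part and the flag valuation on $Y$ genuinely combine into a single $\z^n$-valued faithful valuation on $\c(X)$. Once this structural step is carried out, the finite generation of $S(L)$ for $G$-stable $L$, and hence the polytopality of $\Delta(S(L))$, follow as sketched.
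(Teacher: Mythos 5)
The paper omits the proof of this theorem (deferring to the preprint \cite{Askold-Kiumars-arXiv}), so I am assessing your argument on its own terms. Your overall plan---split $v$ into a highest-weight layer and a residual flag valuation, then deduce polytopality of $\Delta(S(L))$ from finite generation of $S(L)$---is in the spirit of the constructions of Okounkov and of the authors. But two of the steps fail as written. The dimension count is wrong: by Rosenlicht's theorem $\textup{trdeg}\,\c(X)^U = n - \dim(U\cdot x)$ for generic $x$, which is \emph{not} $n-r$ in general. Already for $X=G=\SL(3,\c)$ one has $n=8$, $r=2$, $\dim U=3$, so $\textup{trdeg}\,\c(X)^U = 5 \neq 6$; a flag valuation on $\c(X)^U$ can supply at most $5$ independent coordinates, and your two-layer $v$ cannot be a faithful $\z^n$-valued valuation. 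The missing coordinates must parametrize position \emph{inside} the irreducible constituents $V_\lambda$ (string or Gelfand--Tsetlin data), a third layer absent from your construction.

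Second, the set of highest weights occurring in $L^k$ is \emph{not} contained in the $k$-fold Minkowski sum of $\Lambda(L)$: $L_1L_2$ is a quotient of $L_1\otimes L_2$, and $V_\lambda\otimes V_\mu$ contains $V_\nu$ for various $\nu$ strictly below $\lambda+\mu$ in the dominance order, so new dominant weights appear. (For instance $\SL(3,\c)$ acting diagonally on $X=\c^3\oplus\c^3$ with $L$ the span of the linear coordinates already produces a fundamental weight in $L^2$ outside $2*\Lambda(L)$.) The weight semigroup $\{(k,\lambda):V_\lambda\subset L^k\}$ is in fact finitely generated, but the reason is the Hadziev--Grosshans theorem on finite generation of $U$-invariant subalgebras applied to $\bigoplus_k L^k t^k$, not the elementary Minkowski-sum observation you invoke. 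Finally, you yourself flag the multiplicativity of the assembled $v$ and the finite generation of the fibers of $S(L)$ over the weight semigroup as ``the hard part'' and leave them unverified---but those are exactly the crux. As it stands, the proposal is a plausible programme rather than a proof.
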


\begin{Def}
Let $V$ be a finite dimensional representation of $G$. Let $v = v_1 + \ldots + v_k$ be a
sum of highest weight vectors in $V$. The closure of the $G$-orbit of $v$ in $V$ is called an {\it S-variety}.
\end{Def}

Affine toric varieties are $S$-varieties for $G=(\c^*)^n$.

\begin{Th} \label{th-Newton-polytope-additive}
Let $X$ be an $S$-variety for one of the groups $G=\SL(n, \c)$, $\SO(n, \c)$, $\SP(2n, \c)$, $(\c^*)^n$,
or a direct product of them. Then for the valuation in Theorem \ref{th-G-var-valuation}
and for any choice of $G$-stable subspaces $L_1, L_2$ in $\K$ we have
$$\Delta(S(L_1L_2)) = \Delta(S(L_1)) + \Delta(S(L_2)).$$
\end{Th}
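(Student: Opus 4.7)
The plan is to prove the nontrivial inclusion $\Delta(S(L_1 L_2)) \subseteq \Delta(S(L_1)) + \Delta(S(L_2))$, since the reverse inclusion is already part (4) of Theorem \ref{th-main}. The strategy exploits two special features of the setup: the rigidity of the coordinate ring of an $S$-variety, where products of isotypic components collapse to Cartan components, and the existence for the listed classical groups of a Gelfand-Tsetlin type valuation whose image polytopes are themselves additive in the highest weight.

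First, I would describe every $G$-stable subspace $L \in \K$ via the isotypic decomposition of $\c[X]$. Since $X$ is an $S$-variety based at $v = v_1 + \cdots + v_k$ with $v_i$ of weight $\mu_i$, the coordinate ring decomposes as the multiplicity-free direct sum $\c[X] = \bigoplus_{\lambda \in \Lambda} V_\lambda^*$, where $\Lambda$ is the sub-semigroup of dominant weights generated by $\mu_1, \ldots, \mu_k$. Up to clearing denominators by a $G$-invariant function (which affects neither side of the desired identity, thanks to the multi-linearity of the intersection index and of the Newton convex body construction under products), every $G$-stable $L$ sits inside $\c[X]$ as a finite direct sum $L = \bigoplus_{\lambda \in F(L)} V_\lambda^*$ for some finite $F(L) \subset \Lambda$.

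Second, because $\c[X]$ is multiplicity-free, the product $V_\lambda^* \cdot V_\mu^*$ inside $\c[X]$ equals only the Cartan component $V_{\lambda+\mu}^*$: all other irreducible summands of the abstract tensor product $V_\lambda \otimes V_\mu$ correspond to weights outside $\Lambda$, and any $G$-equivariant map from such a summand into $\c[X]$ vanishes. Consequently $F(L_1 L_2) = F(L_1) + F(L_2)$ and, by induction, $F(L^m) = m \cdot F(L)$. Third, I would invoke the representation-theoretic content of the valuation of Theorem \ref{th-G-var-valuation}: for each of $\SL(n,\c)$, $\SO(n,\c)$, $\SP(2n,\c)$, and tori (and hence their direct products), one can arrange that $v$ restricted to each $V_\lambda^*$ parameterizes a Gelfand-Tsetlin / string basis, so that $v(V_\lambda^*)$ is exactly the set of lattice points of a polytope $P_\lambda$, and these polytopes satisfy Minkowski additivity $P_\lambda + P_\mu = P_{\lambda+\mu}$. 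Combining with step two,
$$v(L_1 L_2) \;=\; \bigcup_{\lambda_i \in F(L_i)} (P_{\lambda_1} + P_{\lambda_2}) \cap \z^n \;=\; v(L_1) + v(L_2),$$
and applying the same reasoning to $L_1^m L_2^m$ and rescaling by $1/m$, the definition of $\Delta(S(\cdot))$ yields the desired Minkowski equality.

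The main obstacle is the additivity property $P_\lambda + P_\mu = P_{\lambda + \mu}$ of the valuation polytopes in step three. It is trivial for tori, where weights add literally; for $\SL(n,\c)$ it follows from the description of Gelfand-Tsetlin patterns by inequalities that are linear in the partition entries, so the defining inequalities for $\lambda + \mu$ are exactly the sums of those for $\lambda$ and $\mu$; for $\SO(n,\c)$ and $\SP(2n,\c)$ analogous symplectic/orthogonal Gelfand-Tsetlin polytopes must be checked type by type. Once the additivity is secured for each classical factor, extending to a direct product $G_1 \times \cdots \times G_r$ is routine: one concatenates the valuations and takes the product of the polytopes. This completes the plan.
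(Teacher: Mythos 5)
This paper explicitly omits proofs (deferring to the preprint \cite{Askold-Kiumars-arXiv}), so there is no internal argument to compare against; I assess your proposal on its own merits and against what the paper's setup makes natural. Your overall strategy --- reducing the nontrivial inclusion $\Delta(S(L_1L_2))\subseteq\Delta(S(L_1))+\Delta(S(L_2))$ to the multiplicity-free, Cartan-component structure of the coordinate ring of an $S$-variety together with a Gelfand--Tsetlin type valuation sending each isotypic block $V_\lambda^*$ onto the lattice points of a polytope $P_\lambda$ --- is the right one and almost certainly in the spirit of the authors' intended proof. But there is a genuine gap at exactly the step that carries all the weight. Your justification of $P_\lambda+P_\mu=P_{\lambda+\mu}$ for $\SL(n,\c)$, namely that the GT inequalities are linear in the highest weight so the inequalities for $\lambda+\mu$ are the sums of those for $\lambda$ and $\mu$, establishes only the inclusion $P_\lambda+P_\mu\subseteq P_{\lambda+\mu}$ (if $x$ satisfies the inequalities for $\lambda$ and $y$ those for $\mu$, then $x+y$ satisfies those for $\lambda+\mu$). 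That is the direction you already have for free from Theorem \ref{th-main}(4). The direction you actually need, $P_{\lambda+\mu}\subseteq P_\lambda+P_\mu$ (every GT pattern for $\lambda+\mu$ decomposes as a sum of patterns for $\lambda$ and $\mu$), does not follow from linearity of the defining inequalities: in general $\{Ax\le b\}+\{Ax\le c\}$ is a proper subset of $\{Ax\le b+c\}$. This ``Minkowski property'' of GT and string polytopes is true, but it requires a separate argument even in type $A$; you treat it as a near-triviality there and defer it entirely for $\SO$ and $\SP$, which is precisely where the theorem's content lies.

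Two secondary points. First, your justification for $V_\lambda^*\cdot V_\mu^*=V_{\lambda+\mu}^*$ --- that the non-Cartan summands of $V_\lambda\otimes V_\mu$ ``correspond to weights outside $\Lambda$'' --- is not the right mechanism, since dominant weights $\nu\prec\lambda+\mu$ can perfectly well lie in $\Lambda$. The correct reason is that the base point $v=v_1+\cdots+v_k$ of an $S$-variety is fixed by a maximal unipotent subgroup, so the open orbit is horospherical, and in the coordinate ring of any horospherical $G$-variety the multiplication of isotypic components is the Cartan projection. Second, the set equality $v(L_1L_2)=v(L_1)+v(L_2)$ that you assert is stronger than what the theorem requires and amounts to an integer-decomposition property of the pairs $(P_\lambda,P_\mu)$ that you have not verified; since the conclusion concerns only the Newton convex bodies, it suffices to show $\Delta(S(L))=\textup{conv}\bigl(\bigcup_{\lambda\in F(L)}P_\lambda\bigr)$ and then argue at the level of convex hulls, where the Minkowski additivity of the $P_\lambda$ is the only input.
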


\begin{Cor}[Bernstein theorem for $S$-varieties] \label{cor-Bernstein-S-var}
Let $X$ be an $S$-variety for one of the groups $G=\SL(n, \c)$, $\SO(n, \c)$, $\SP(2n, \c)$, $(\c^*)^n$,
or a direct product of them. Let $L_1, \ldots, L_n \in \K$ be
$G$-stable subspaces. Then, for the valuation in Theorem \ref{th-G-var-valuation}, we have
$$[L_1, \ldots, L_n] = n!V(\Delta(S(L_1)), \ldots, \Delta(S(L_n))),$$ where
$V$ is the mixed volume.
\end{Cor}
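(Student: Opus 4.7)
The plan is a polarization argument: I will extract the mixed intersection formula from the self-intersection formula of Theorem \ref{th-main}(3) by combining the additivity of Newton convex bodies (Theorem \ref{th-Newton-polytope-additive}) with the multi-linearity of the intersection index (Theorem \ref{th-multi-lin-int-index}) and the symmetry of both $[\cdot,\ldots,\cdot]$ and $V(\cdot,\ldots,\cdot)$.

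Let $L_1,\ldots,L_n\in\K$ be $G$-stable. Since the product of two $G$-stable subspaces is $G$-stable, the subspace $L(\lambda):=L_1^{\lambda_1}\cdots L_n^{\lambda_n}$ is $G$-stable for every $\lambda=(\lambda_1,\ldots,\lambda_n)\in\n^n$. I would first consider
$$P(\lambda):=[\,L(\lambda),\,L(\lambda),\,\ldots,\,L(\lambda)\,]\qquad(n\text{ copies of }L(\lambda)).$$
Expanding each slot by Theorem \ref{th-multi-lin-int-index}(1) yields
$$P(\lambda)=\sum_{j_1,\ldots,j_n=1}^{n}\lambda_{j_1}\cdots\lambda_{j_n}\,[L_{j_1},\ldots,L_{j_n}],$$
so $P$ is a polynomial of degree $n$ in $\lambda$, and by the symmetry in Proposition \ref{prop-obvious-prop-int-index}(1) the coefficient of the monomial $\lambda_1\lambda_2\cdots\lambda_n$ equals $n!\,[L_1,\ldots,L_n]$.

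On the geometric side, iterating the additivity of Theorem \ref{th-Newton-polytope-additive} gives $\Delta(S(L(\lambda)))=\lambda_1\,\Delta(S(L_1))+\cdots+\lambda_n\,\Delta(S(L_n))$. Assuming that $L(\lambda)$ is big (so that Theorem \ref{th-main}(3) applies), one obtains
$$P(\lambda)=n!\,\Vol_n\Bigl(\sum_{i=1}^{n}\lambda_i\,\Delta(S(L_i))\Bigr),$$
a polynomial identity in $\lambda$. By the definition of mixed volume, the coefficient of $\lambda_1\cdots\lambda_n$ on the right-hand side equals $(n!)^2\,V(\Delta(S(L_1)),\ldots,\Delta(S(L_n)))$. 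Matching coefficients of $\lambda_1\cdots\lambda_n$ in the two expressions for $P$ then yields $[L_1,\ldots,L_n]=n!\,V(\Delta(S(L_1)),\ldots,\Delta(S(L_n)))$, as desired.

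The main obstacle is ensuring the bigness of $L(\lambda)$ so that Theorem \ref{th-main}(3) applies. I would handle this by fixing once and for all a big $G$-stable subspace $M\in\K$, replacing each $L_i$ by $L_iM^{k_i}$ with $k_i\geq 1$ (so that $L_iM^{k_i}\supseteq M^{k_i}$ is automatically big), and running the polarization for this enlarged family. Both sides of the target identity become multi-linear polynomials in $(k_1,\ldots,k_n)$ by Theorem \ref{th-multi-lin-int-index}(1) and by the multi-linearity of mixed volume, and they agree for all $k_i\geq 1$; hence they agree as polynomials. Comparing the constant term in $(k_1,\ldots,k_n)$ then recovers the Bernstein identity for the original subspaces $L_1,\ldots,L_n$.
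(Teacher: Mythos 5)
Your proposal is correct and follows exactly the route the paper has in mind: the paper states (end of Section 8 and again in Section 11) that Bernstein follows from Ku\v{s}nirenko plus the additivity identity $L_{A+B}=L_AL_B$, and Corollary~\ref{cor-Bernstein-S-var} is the same deduction with Theorem~\ref{th-main}(2)--(3) playing the role of Ku\v{s}nirenko and Theorem~\ref{th-Newton-polytope-additive} supplying the additivity. Your polarization computation is carried out correctly: both $[L(\lambda),\ldots,L(\lambda)]$ and $n!\Vol_n(\sum\lambda_i\Delta(S(L_i)))$ are degree-$n$ homogeneous polynomials in $\lambda$, and matching the $\lambda_1\cdots\lambda_n$ coefficient gives $n![L_1,\ldots,L_n]=(n!)^2 V(\Delta(S(L_1)),\ldots,\Delta(S(L_n)))$, which is the statement.

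One point worth tightening. To apply Theorem~\ref{th-main}(3) you need the Kodaira map of $L(\lambda)$ to be birational onto its image, whereas ``big'' only guarantees this for some power $L(\lambda)^m$; but this is harmless, since replacing $L(\lambda)$ by $L(\lambda)^m$ scales $[L(\lambda),\ldots,L(\lambda)]$ by $m^n$ and $\Delta(S(L(\lambda)))$ by $m$ (the latter because $S(L^m)_k=S(L)_{mk}$), so the identity for $L(\lambda)^m$ descends to $L(\lambda)$ — this is the same observation the paper uses in the proof of Corollary~\ref{cor-Brunn-Mink-int-index}. Also, in your final bigness patch, the inclusion $L_iM^{k_i}\supseteq M^{k_i}$ requires $1\in L_i$; more robustly one should note $L_iM^{k_i}\supseteq f_0M^{k_i}$ for any $0\neq f_0\in L_i$, and $f_0M^{k_i}$ has the same Kodaira image as $M^{k_i}$ since the map is projective, so $L_iM^{k_i}$ is indeed big. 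With these small clarifications the argument is complete.
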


Another class of $G$-varieties for which the additivity of the Newton polytope holds is the
class of symmetric homogeneous spaces.
\begin{Def}
Let $\sigma$ be an involution of $G$, i.e. an order $2$ algebraic automorphism. Let $H = G^\sigma$ be the
fixed point subgroup of $\sigma$. The homogeneous space $G/H$ is called a {\it symmetric homogeneous space}.
\end{Def}

\begin{Ex}
The map $M \mapsto (M^{-1})^t$ is an involution of $G = \SL(n, \c)$ with
fixed point subgroup $H = \SO(n, \c)$. The symmetric homogeneous space $G/H$ can be identified with the
space of non-degenerate quadrics in $\c P^{n-1}$.
\end{Ex}

Any symmetric homogeneous space is an affine $G$-variety (with left $G$-action).

Under mild conditions on the $L_i$, analogues of
Theorem \ref{th-Newton-polytope-additive} and Corollary \ref{cor-Bernstein-S-var}
hold for symmetric varieties.

Finally, the above theorems extend to subspaces of sections of $G$-line bundles.

\noindent Askold G. Khovanskii\\Department of
Mathematics\\University of Toronto \\Toronto, ON M5S 2E4\\Canada\\
{\it Email:} {\sf askold@math.utoronto.ca}\\

\noindent Kiumars Kaveh\\Department of Mathematics\\University of
Toronto\\Toronto, ON M5S 2E4\\Canada\\
{\it Email:} {\sf kaveh@math.utoronto.ca}\\
\end{document}